\title[$L^2$-Betti number and approximation in  arbitrary characteristic]
{The first $L^2$-Betti number and approximation in arbitrary characteristic}
\author{Mikhail Ershov}
\thanks{The first author is supported in part by the
NSF grant DMS-0901703 and  the Sloan Research Fellowship grant BR 2011-105.}
\address{Department of Mathematics, University of Virginia, PO Box 400137, Charlottesville, VA 22904-4137, U.S.A.}
\email{ershov@virginia.edu}
\urladdr{http://people.virginia.edu/~mve2x}
\author{Wolfgang L\"uck}
\thanks{This paper is financially supported by the Leibniz-Preis of the second author. }
        \address{Mathematisches Institut der Universit\"at Bonn, Endenicher Allee 60, 53115 Bonn, Germany}
         \email{wolfgang.lueck@him.uni-bonn.de}
          \urladdr{http://www.him.uni-bonn.de/lueck}
         \date{September 23,  2012}
\keywords{First $L^2$-Betti number, approximation in prime characteristic}
\subjclass[2010]{Primary: 20F65; Secondary: 46Lxx}
\DeclareMathAlphabet\EuR{U}{eur}{m}{n}
\SetMathAlphabet\EuR{bold}{U}{eur}{b}{n}
\theoremstyle{plain}
\newtheorem{theorem}{Theorem}[section]
\newtheorem{lemma}[theorem]{Lemma}
\newtheorem{corollary}[theorem]{Corollary}
\theoremstyle{definition}
\newtheorem{definition}[theorem]{Definition}
\newtheorem{remark}[theorem]{Remark}
\newtheorem{question}[theorem]{Question}
\global\let\c@equation=\c@theorem}
\newcommand{\comsquare}[8]                   
{\begin{CD}
#1 @>#2>> #3\\
@V{#4}VV @V{#5}VV\\
#6 @>#7>> #8
\end{CD}
}
\newcommand{\xycomsquare}[8]                   
{\xymatrix
{#1 \ar[r]^{#2} \ar[d]^{#4} &
#3 \ar[d]^{#5}  \\
#6\ar[r]^{#7} &
#8
}
}
\newcommand{\xycomsquareminus}[8]                      
{\xymatrix{#1 \ar[r]^-{#2} \ar[d]^-{#4} &
#3 \ar[d]^-{#5}  \\
#6\ar[r]^-{#7} &
#8
}
}
\newcommand{\caln}{{\mathcal N}}
\newcommand{\IF}{{\mathbb F}}
\newcommand{\IN}{{\mathbb N}}
\newcommand{\IQ}{{\mathbb Q}}
\newcommand{\IR}{{\mathbb R}}
\newcommand{\IZ}{{\mathbb Z}}
\newcommand{\id}{\operatorname{id}}
\newcommand{\im}{\operatorname{im}}
\newcommand{\pr}{\operatorname{pr}}
\newcommand{\RG}{\operatorname{RG}}
\newcommand{\tors}{\operatorname{tors}}
\newcommand \la {\langle}
\newcommand \ra {\rangle}
\newcommand{\higherlim}[3]{{\setbox1=\hbox{\rm lim}
        \setbox2=\hbox to \wd1{\leftarrowfill} \ht2=0pt \dp2=-1pt
        \mathop{\vtop{\baselineskip=5pt\box1\box2}}
        _{#1}}^{#2}#3}
\newcommand{\version}[1]                       
{\begin{center} last edited on #1\\
last compiled on \today\\
name of texfile: \jobname
\end{center}
}
\newcounter{commentcounter}
\begin{document}


\typeout{------------------------- Abstract ------------------------------------}

\begin{abstract}
Let $G$ be a finitely generated group and $G = G_0 \supseteq G_1
  \supseteq G_2 \supseteq \cdots$ a descending chain of finite index normal
subgroups of $G$. Given a field $K$, we consider the sequence
$\frac{b_1(G_i;K)}{[G:G_i]}$ of normalized first Betti numbers
of $G_i$ with coefficients in $K$, which we call a $K$-approximation
for $b_1^{(2)}(G)$, the first $L^2$-Betti number of $G$. In this
paper we address the questions of when $\IQ$-approximation
and $\IF_p$-approximation have a limit, when these limits coincide,
when they are independent of the sequence $(G_i)$ and how they are related
to $b_1^{(2)}(G)$. In particular, we prove the inequality $\lim_{i\to\infty} \frac{b_1(G_i;\IF_p)}{[G:G_i]}\geq b_1^{(2)}(G)$
under the assumptions that $\cap G_i=\{1\}$ and each $G/G_i$ is a finite
$p$-group.
\end{abstract}

\maketitle


 \typeout{---------------------   Introduction ----------------------}

\setcounter{section}{0}
\section{Introduction}
\label{sec:Introduction}

\subsection{$\IQ$-approximation for the first $L^2$-Betti number}
Let $G$ be a finitely generated group. Given a field $K$, we let
$b_1(G;K) = \dim_{K}(H_1(G;K))$ be the first Betti number of $G$ with coefficients
in $K$ and $b_1(G)=b_1(G;\IQ)$ where $\IQ$ denotes the field of rational numbers.
Denote by $b_1^{(2)}(G)$ the \emph{first $L^2$-Betti number of $G$.}
Assuming that $G$ is finitely presented and residually finite, by L\"uck
Approximation Theorem (see~\cite{Lueck(1994c)}),
$b_1^{(2)}(G)$ can be approximated by normalized rational first Betti numbers of finite index subgroups of $G$:

\begin{theorem}[L\"uck approximation theorem]
\label{thm:Lueckapproximation}
Let $G$ be a finitely
presented residually finite group and $G=G_0\supseteq G_1\supseteq \ldots$
a descending chain of finite index normal subgroups of $G$, with
$\cap_{i\in\mathbb N}G_i=\{1\}$. Then
\begin{equation}
\label{Lueck_approx_equality}
b_1^{(2)}(G) = \lim_{i\to \infty}\frac{b_1(G_i)}{[G:G_i]}.
\end{equation}
\end{theorem}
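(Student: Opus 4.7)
The plan is to translate the theorem into a statement about spectral density functions of a finite matrix over $\IZ G$ and then establish convergence at zero via L\"uck's log-determinant trick. Fix a finite presentation of $G$ giving a 2-complex $X$ with one 0-cell, $g$ 1-cells, and $r$ 2-cells, so $\pi_1(X) = G$. The cellular boundary $C_2(\widetilde X) \to C_1(\widetilde X)$ is multiplication by a matrix $A \in M_{r,g}(\IZ G)$; let $A_i \in M_{r,g}(\IZ[G/G_i])$ be its image. Using $H_1(G_i;\IQ) = H_1(X_i;\IQ)$, where $X_i \to X$ is the cover associated to $G_i$, rank--nullity on the cellular chain complex of $X_i$ gives
$$\frac{b_1(G_i)}{[G:G_i]} \;=\; g - 1 + \frac{1}{[G:G_i]} - \frac{\rk(A_i)}{[G:G_i]}.$$
The analogous computation on the $L^2$-chain complex of $\widetilde X$, using $\dim_{\caln(G)} H_0^{(2)}(\widetilde X) = 0$ for infinite $G$, yields $b_1^{(2)}(G) = g - 1 - \dim_{\caln(G)}\overline{\im(A^{(2)})}$, where $A^{(2)}:\ell^2(G)^r \to \ell^2(G)^g$ is the $L^2$-extension. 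Setting $T = A^*A \in M_r(\IZ G)$ and $T_i \in M_r(\IZ[G/G_i])$ its image, the theorem reduces to
$$\frac{\dim_\IQ \ker(T_i)}{[G:G_i]} \;\longrightarrow\; \dim_{\caln(G)} \ker(T^{(2)}).$$

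Next, encode both sides as spectral density functions: $F(\lambda) := \tr_{\caln(G)}(E_\lambda^T)$ and $F_i(\lambda) := \frac{1}{[G:G_i]}\dim_\IQ \im(E_\lambda^{T_i})$, where $E_\lambda$ denotes the spectral projection onto $[0,\lambda]$. Both are nondecreasing, right-continuous, of total mass $r$, and supported in $[0,K]$ for a common bound $K$ depending only on the $\ell^1$ norm of the entries of $T$. The goal becomes $F_i(0) \to F(0)$. The key input is moment convergence: $\frac{1}{[G:G_i]}\tr(T_i^k)$ equals $\tr_{\caln(G)}(T^k)$ whenever $G_i$ meets the finite support of $T^k$ only at the identity, which eventually holds since $\bigcap G_i = \{1\}$; hence $\int \lambda^k\, dF_i \to \int \lambda^k\, dF$ for every $k$. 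Since all mass is confined to the compact interval $[0,K]$, Stone--Weierstrass upgrades this to weak convergence $F_i \to F$, hence pointwise convergence at every continuity point of $F$.

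The upper bound $\limsup_i F_i(0) \le F(0)$ follows immediately from weak convergence and right-continuity of $F$. The main obstacle is the reverse inequality $\liminf_i F_i(0) \ge F(0)$, which cannot be obtained from weak convergence alone: one must rule out spectral mass of $T_i$ concentrating just above zero. The resolution is L\"uck's log-determinant trick. Since $T_i$ has integer matrix entries, its characteristic polynomial lies in $\IZ[x]$, and writing it as $x^{m_i}q_i(x)$ with $q_i(0) \ne 0$ forces $|q_i(0)| \ge 1$; equivalently, the product $\det{}'(T_i)$ of the nonzero eigenvalues of $T_i$ satisfies $\det{}'(T_i) \ge 1$. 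Hence
$$\int_{0^+}^K \log\lambda \, dF_i(\lambda) \;=\; \frac{1}{[G:G_i]}\log \det{}'(T_i) \;\ge\; 0.$$
Assuming $K \ge 1$ (which we may after enlarging $K$), for $\epsilon \in (0,1)$ the tail bound $\int_\epsilon^K \log\lambda\, dF_i \le r\log K$ gives $\int_{0^+}^\epsilon \log\lambda\, dF_i \ge -r\log K$, while $\log\lambda \le \log\epsilon < 0$ on $(0,\epsilon]$ gives $\int_{0^+}^\epsilon \log\lambda\, dF_i \le \log\epsilon\,(F_i(\epsilon) - F_i(0))$. Dividing by $\log\epsilon < 0$ yields $F_i(\epsilon) - F_i(0) \le r\log K / |\log\epsilon|$; letting $i\to\infty$ along continuity points of $F$ and then $\epsilon \to 0^+$ gives $\liminf_i F_i(0) \ge F(0)$, completing the proof. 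Integrality of the entries of $T_i$ is precisely what drives this step: without it, weak moment convergence would be unable to detect a possible concentration of spectral mass exactly at~$0$.
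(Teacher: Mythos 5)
The paper does not prove Theorem~\ref{thm:Lueckapproximation} at all --- it is quoted as background and attributed to \cite{Lueck(1994c)} --- and your argument is a correct reconstruction of exactly that original proof: reduction to the kernel of the combinatorial Laplacian-type operator $T=A^*A$, stabilization of the normalized traces of $T_i^k$ using $\bigcap G_i=\{1\}$, weak convergence of spectral density functions for the easy inequality, and the integrality/log-determinant bound $\det'(T_i)\ge 1$ for the hard inequality $\liminf_i F_i(0)\ge F(0)$. All steps are sound (the only cosmetic slip is the phrase ``letting $i\to\infty$ along continuity points'': one fixes a continuity point $\epsilon$ of $F$, lets $i\to\infty$, and then sends $\epsilon\to 0^+$), so this matches the approach of the cited source.
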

\noindent
In the sequel we will occasionally refer to a descending chain $(G_i)$
of finite index normal subgroups of $G$ as a \emph{finite index normal chain} in $G$
and to the associated sequence
$\left(\frac{b_1(G_i)}{[G:G_i]}\right)_i$ as \emph{$\IQ$-approximation}.

  If we drop the assumption that $G$ is finitely presented, but still
  require that $\cap_{i\in\IN}G_i=\{1\}$, one still has inequality
  $b_1^{(2)}(G)\geq \limsup_{i\to \infty}\frac{b_1(G_i)}{[G:G_i]}$
by \cite[Theorem~1.1]{Lueck-Osin(2011)}, but equality need  not hold
\cite[Theorem~1.2]{Lueck-Osin(2011)}. The latter is proved in \cite{Lueck-Osin(2011)}
by constructing an example where $b_1^{(2)}(G)>0$, but
$\limsup_{i\to \infty}\frac{b_1(G_i)}{[G:G_i]}=0$ for any chain $(G_i)$
as above. In Section~\ref{sec:Q-approximation_without_limit} we will describe a variation of this construction
showing that the $\IQ$-approximation
$\left(\frac{b_1(G_i)}{[G:G_i]}\right)_i$ may not even have a limit:

\begin{theorem}
\label{Qappr_nolimit} There exists a finitely generated residually finite group $G$
and a descending chain $(G_i)_{i\in\IN}$ of finite index normal subgroups of $G$,
with $\cap_{i\in\IN}G_i=\{1\}$, such that $\lim_{i\to \infty}\frac{b_1(G_i)}{[G:G_i]}$ does not exist.
\end{theorem}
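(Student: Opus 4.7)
My plan is to adapt the construction of \cite[Theorem~1.2]{Lueck-Osin(2011)}. L\"uck--Osin exhibit a finitely generated residually finite group $\Gamma$ with $b_1^{(2)}(\Gamma) > 0$ and a normal chain $(\Gamma_i)$ with $\bigcap_i \Gamma_i = \{1\}$ along which $\frac{b_1(\Gamma_i)}{[\Gamma:\Gamma_i]}$ tends to $0$; in their construction, $\Gamma$ is a quotient $F/N$ of a finitely presented group $F$ with $b_1^{(2)}(F) > 0$ by the normal closure $N = \langle\!\langle r_1, r_2, \ldots\rangle\!\rangle$ of a carefully chosen sequence of relators $r_k$, each lying deep in a prescribed chain $(F_k)$ of $F$ satisfying L\"uck's approximation $\frac{b_1(F_k)}{[F:F_k]} \to b_1^{(2)}(F)$.

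Starting from this setup, I would partition $\IN$ into two cofinal subsets $S$ and $T$ and modify the construction by introducing a nontrivial relator $r_k$ only when $k \in T$, leaving the levels indexed by $S$ untouched. The killing relators $r_k$ with $k \in T$ are chosen deep enough in $F_k$ that, in the resulting quotient $G = F/N$ with $G_i := F_iN/N$, one has $\frac{b_1(G_k)}{[G:G_k]} < 1/k$ for each $k \in T$, mirroring the L-O estimate. At untouched levels $k \in S$ the normalized Betti number $\frac{b_1(G_k)}{[G:G_k]}$ can be compared with $\frac{b_1(F_k)}{[F:F_k]}$: each previously added relator $r_\ell$ with $\ell \in T$ and $\ell < k$ can reduce $b_1(G_k)$ by at most $[F:F_k]/[F:F_\ell]$ (the number of $F$-conjugates of $r_\ell$ lying in $F_k$), so the cumulative error in the normalized Betti number is at most $\sum_{\ell \in T} 1/[F:F_\ell]$. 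Choosing the indices so that $[F:F_k]$ grows fast enough to make this sum less than $b_1^{(2)}(F)/2$, and using $\frac{b_1(F_k)}{[F:F_k]} \to b_1^{(2)}(F)$, gives $\frac{b_1(G_k)}{[G:G_k]} \geq b_1^{(2)}(F)/2$ for all sufficiently large $k \in S$.

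Consequently $\liminf_i \frac{b_1(G_i)}{[G:G_i]} = 0$ along $T$ while $\limsup_i \frac{b_1(G_i)}{[G:G_i]} \geq b_1^{(2)}(F)/2 > 0$ along $S$, so the limit does not exist, and residual finiteness of $G$ together with $\bigcap_i G_i = \{1\}$ are inherited from the chain $(F_i)$ by standard arguments. The main obstacle is the quantitative bookkeeping showing that each $r_\ell$ does only $O(1/[F:F_\ell])$ damage to the normalized first Betti number at every later level $k \in S$; this is a controlled variant of the delicate estimates already present in \cite{Lueck-Osin(2011)}. A secondary point requiring care is that $[G:G_i]$ can be a proper divisor of $[F:F_i]$, but as in \cite{Lueck-Osin(2011)} this is handled by placing the relators deep enough that $F_iN$ remains close to $F_i$.
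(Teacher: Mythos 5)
Your high-level plan --- interleave ``killed'' levels with ``untouched'' levels in a L\"uck--Osin-type quotient of a free (or finitely presented) group --- is indeed the strategy of the paper (Theorem~\ref{thm:Qapproximation}). But the quantitative core of your argument has a genuine gap. The claim that a relator $r_\ell$ added at level $\ell$ reduces $b_1(G_k)$ at a later level $k$ by at most $[F:F_k]/[F:F_\ell]$ is not justified and is false in general: the number of $F$-conjugates of $r_\ell$ needed to normally generate $\langle\langle r_\ell\rangle\rangle$ inside $F_k$ is governed by the order of the root of $r_\ell$ modulo $F_k$ (it equals $[F:F_k]/p^{\,e_p(r_\ell,F)-e_p(r_\ell,F_k)}$ in the notation of Lemma~\ref{lem:Puchta}), not by the index $[F_\ell:F_k]$. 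For a generic relator this count is comparable to $[F:F_k]$, so a single $r_\ell$ can destroy a bounded-away-from-zero \emph{proportion} of $b_1(F_k)$ at every later level, and your error series $\sum_{\ell\in T}1/[F:F_\ell]$ does not control anything. Moreover, one relator per level $k\in T$ cannot in general force $b_1(G_k)/[G:G_k]<1/k$: to kill most of $b_1(F_k)\approx b_1^{(2)}(F)\,[F:F_k]$ one needs on the order of $[F:F_k]$ independent relations in $F_k$, and once you account for that multiplicity, even your per-relator bound would give cumulative normalized damage of order $b_1^{(2)}(F)$ at the levels in $S$, i.e.\ potentially everything.

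The paper closes exactly this gap with the $p$-deficiency machinery. All relators are chosen to be high $p$-th powers $f^{p^n}$ whose $p$-th roots survive in the pro-$p$ completion ($p$-regularity); by Lemma~\ref{lem:Puchta} such a relator contributes only $p^{-n}$ to the normalized relator count in every deeper normal subgroup of $p$-power index, which is the correct replacement for your damage bound. The levels to be killed are not arbitrary: one takes $F_{2N+1}=[F_{2N},F_{2N}]F_{2N}^{p^e}$ with $e$ enormous, so that only $d(F_{2N})$ homology classes need to survive and Corollary~\ref{cor:pregular} can eliminate the rest at negligible $p$-deficiency cost. Finally, the lower bound at the untouched levels is not obtained by comparing with the chain in $F$ at all, but by applying Lemma~\ref{lem:pregular} (positive $p$-deficiency produces a deep normal subgroup of $p$-power index with ${\rm def}(H)-1>(d-1-\varepsilon)[G:H]$) together with supermultiplicativity of ${\rm def}(\cdot)-1$. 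Without some substitute for these three ingredients --- proper-power relators with surviving roots, specially chosen killed levels, and a deficiency-based lower bound --- your bookkeeping does not go through.
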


Another sequence we shall be interested in  is {\it $\IF_p$-approximation}, that is,$\left(\frac{b_1(G_i;\IF_p)}{[G:G_i]}\right)_i$, where
$\IF_p$ is the finite field of prime order $p$.
This sequence is particularly important under the additional assumption
that $(G_i)$ is a {\it $p$-chain}, that is, each $G_i$ has $p$-power index
(equivalently, $G/G_i$ is a finite $p$-group). In this case, $\left(\frac{b_1(G_i;\IF_p)}{[G:G_i]}\right)_i$
is monotone decreasing and therefore has a limit, often called \emph{$p$-gradient} or \emph{mod $p$ homology gradient} (see, e.g.,
\cite{Lackenby(2007covering)}).

Since obviously $b_1(H)\leq b_1(H;\IF_p)$ for any group $H$,
one always has inequality
\begin{equation}
\label{inequality_trivial}
\limsup_{i\to \infty}\frac{b_1(G_i)}{[G:G_i]}\leq
\limsup_{i\to \infty}\frac{b_1(G_i;\IF_p)}{[G:G_i]},
\end{equation}
and it is natural to ask for sufficient conditions under which
equality holds. Of particular interest is the case when
$G$ is finitely presented and $\cap_{i\in\IN}G_i=\{1\}$
when $\IQ$-approximation does have a limit by
Theorem~\ref{thm:Lueckapproximation}.

  \begin{question}[$\IQ$-approximation and $\IF_p$-approximation]
    \label{que:Q-approximation_versus-F_p_approximation}
For which finitely presented groups $G$ and finite index normal chains $(G_i)$ with $\cap_{i\in\IN}G_i=\{1\}$
do we have equality
    \[
    \lim_{i\to \infty}\frac{b_1(G_i)}{[G:G_i]}=
\lim_{i\to \infty}\frac{b_1(G_i;\IF_p)}{[G:G_i]}
{?}
    \]
  \end{question}

If $G$ is not finitely presented, the above equality need not hold
even if we require that $(G_i)$ is a $p$-chain. Indeed,
as proved in \cite{Osin(2011_rankgradient)} and independently in
\cite{Schlage-Puchta(2012)}, there exists a $p$-torsion
residually-$p$ group $G$ with $\lim_{i\to \infty}\frac{b_1(G_i;\IF_p)}{[G:G_i]}>0$
for any $p$-chain $(G_i)$ in $G$ (and since $G$ is residually-$p$,
we can choose a $p$-chain with $\cap G_i=\{1\}$). Since $b_1(H)=0$ for any torsion group $H$, we have $\lim_{i\to \infty}\frac{b_1(G_i)}{[G:G_i]}=0$
for such group $G$.

In Section~\ref{sec:A_counterexample_with_non-trivial_intersection} we give
an example showing that the answer to Question~\ref{que:Q-approximation_versus-F_p_approximation} would 
also become negative if we drop the assumption $\cap_{i\in\IN}G_i=\{1\}$, even if $G$ is finitely presented and $(G_i)$ is a $p$-chain which has infinitely many distinct terms.

\subsection{Comparing $\IF_p$-approximation and first $L^2$-Betti number}

Since both $\IF_p$-approximation and the first $L^2$-Betti number
provide upper bounds for $\IQ$-approximation, it is natural to ask how
the former two quantities are related to each other. We address this question
in the case of $p$-chains.
  \begin{theorem}
    \label{cor:The_first_L2-Betti_number_and_F_p-approximation}
 Let $p$ be a prime number. Let $G$ be a finitely generated group
and  $G = G_0 \supseteq G_1 \supseteq G_2 \supseteq \cdots$ a descending
chain of normal subgroups of $G$ of $p$-power index.
Then
\begin{enumerate}
\item \label{cor:The_first_L2-Betti_number_and_F_p-approximation:monotone}
The sequence $\left(\frac{b_1(G_i;\IF_p)}{[G:G_i]}\right)_i$ is monotone decreasing and therefore
converges;
\item \label{cor:The_first_L2-Betti_number_and_F_p-approximation:inequality}
Assume that $\bigcap_{i\in \IN} G_i=\{1\}$.
Then
\[
b_1^{(2)}\bigl(G) \le \lim_{i \to \infty} \frac{b_1(G_i;\IF_p)}{[G:G_i]}.
\]
\end{enumerate}
\end{theorem}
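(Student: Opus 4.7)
For part (1), the monotonicity reduces to the Schreier-type inequality that for any finitely generated group $H$ and normal subgroup $K \trianglelefteq H$ with $H/K$ a finite $p$-group, $b_1(K; \IF_p) \le [H:K] \cdot b_1(H; \IF_p)$. Applied to $(H,K) = (G_i, G_{i+1})$ and divided by $[G:G_{i+1}]$ this yields $\frac{b_1(G_{i+1};\IF_p)}{[G:G_{i+1}]} \le \frac{b_1(G_i;\IF_p)}{[G:G_i]}$. To prove the Schreier-type inequality I would combine the five-term Lyndon--Hochschild--Serre exact sequence in $\IF_p$-homology for $1 \to K \to H \to Q \to 1$ with Nakayama's lemma for the local ring $\IF_p[Q]$ (the augmentation ideal is nilpotent since $Q$ is a finite $p$-group), which gives $\dim_{\IF_p} V \le |Q| \cdot \dim_{\IF_p}(V_Q)$ for any $\IF_p[Q]$-module $V$. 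The Schur-multiplier term $H_2(Q;\IF_p)$ appearing in the five-term sequence is best absorbed via a Reidemeister--Schreier argument on a free presentation of $H$.

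For part (2), my plan is to reduce to the finitely presented case, where the inequality follows from L\"uck's approximation theorem (Theorem~\ref{thm:Lueckapproximation}) combined with the elementary bound $b_1(\cdot; \IQ) \le b_1(\cdot; \IF_p)$. Present $G = F/N$ with $F$ a free group of finite rank, write $N = \bigcup_k R_k$ as an ascending union of normally finitely generated subgroups of $F$, and set $H^{(k)} := F/R_k$ (finitely presented) together with $H^{(k)}_i := \pi_k^{-1}(G_i)$, where $\pi_k\colon H^{(k)} \twoheadrightarrow G$; these form a $p$-chain in $H^{(k)}$ of index $[G:G_i]$ with nontrivial intersection $\ker \pi_k = N/R_k$. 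The crux of the argument is to establish, for each fixed $k$, the per-level inequality
\[
b_1^{(2)}(G) \le \lim_i \frac{b_1(H^{(k)}_i; \IF_p)}{[G:G_i]}.
\]
Because $\cap_i H^{(k)}_i \ne \{1\}$, L\"uck's classical theorem does not apply to $(H^{(k)}, (H^{(k)}_i))$ directly; I would adapt the Fox-calculus / Jacobian-matrix approach of L\"uck--Osin~\cite{Lueck-Osin(2011)}, expressing $b_1^{(2)}(G)$ as the $\mathcal{N}G$-dimensional corank of a Jacobian matrix built from an (infinite) presentation of $G$, expressing $b_1(H^{(k)}_i; \IF_p)$ as the $\IF_p$-corank of the induced Jacobian over $\IF_p[H^{(k)}/H^{(k)}_i]$, and using spectral approximation of the von Neumann dimension by normalized ranks over finite quotients together with the matrix inequality $\mathrm{rank}_{\IF_p}(A) \le \mathrm{rank}_{\IQ}(A)$ for integer matrices.

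Once the per-level inequality is in hand, observe that $f_k(i) := b_1(H^{(k)}_i; \IF_p)/[G:G_i]$ is monotone decreasing in $i$ (by part~(1) applied to $H^{(k)}$) and in $k$ (since $H^{(k+1)}_i$ is a quotient of $H^{(k)}_i$), and that $\inf_k f_k(i) = b_1(G_i; \IF_p)/[G:G_i]$ because $H_1(-;\IF_p)$ commutes with the direct limit $G_i = \lim_k H^{(k)}_i$. The joint monotonicity lets us interchange $\inf_k$ and $\lim_i$ to conclude
\[
b_1^{(2)}(G) \le \inf_k \lim_i f_k(i) = \lim_i \inf_k f_k(i) = \lim_i \frac{b_1(G_i; \IF_p)}{[G:G_i]}.
\]
The main obstacle is the per-level inequality: it amounts to an $\IF_p$-strengthening of the L\"uck--Osin inequality for finitely generated groups and requires a careful adaptation of spectral approximation for group-ring matrices to the modular setting.
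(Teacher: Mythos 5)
Your strategy for part~(2) is sound and is, in substance, the route of the paper's first proof in Section~\ref{sec:The_first_L2-Betti_number_and_approximation_in_prime_characteristic}: there the (possibly infinite) family of $2$-cells of $K\backslash EG$ is truncated to $C_2(X)|_m$, characteristic-zero approximation is applied to each finite truncation, the inequality $\dim_{\IQ}\ge\dim_{\IF_p}$ is inserted at each finite level, and the two limits are interchanged via the monotonicity supplied by Lemma~\ref{lem:monotonicity} --- exactly your $\inf_k$/$\lim_i$ interchange, with relator truncations $H^{(k)}$ in place of cell truncations. (The paper's second proof, via Theorem~\ref{thm:Lueck_notes}, instead chooses for each fixed $n$ a truncation adapted to $n$, with the discarded relators inside $[F_n,F_n]F_n^p$, so that $b_1(\widetilde G_n;\IF_p)=b_1(G_n;\IF_p)$ on the nose and no interchange of limits is needed.) One warning: your closing description of the per-level inequality is misleading and, taken literally, would sink the proof. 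No ``adaptation of spectral approximation to the modular setting'' is needed, nor is one available --- approximating von Neumann dimension by normalized $\IF_p$-ranks is a well-known open problem. What makes the per-level step work is precisely that the truncated Jacobian $J_k$ is a \emph{finite} matrix over $\IZ G$: characteristic-zero approximation applies to it along the chain $(G_i)$ even though $G$ is not finitely presented, giving $\lim_i b_1(H^{(k)}_i)/[G:G_i]\ge b_1^{(2)}(G)$ because $J_k$ is a submatrix of the full Jacobian, and only afterwards does one pass to $\IF_p$ via $\mathrm{rank}_{\IF_p}\le\mathrm{rank}_{\IQ}$ at each finite level. You do list these ingredients, so this is a defect of framing rather than of substance, but the ``modular spectral approximation'' you call the crux must be avoided, not solved.

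For part~(1), the Schreier-type inequality you state is correct, standard, and sufficient (the paper simply cites Lackenby's Proposition~3.7), but your proposed derivation does not close. The five-term sequence together with Nakayama yields only $b_1(K;\IF_p)\le |Q|\bigl(b_1(H;\IF_p)+\dim_{\IF_p}H_2(Q;\IF_p)\bigr)$, and $\dim_{\IF_p}H_2(Q;\IF_p)$ is genuinely large for most finite $p$-groups (already $\binom{n}{2}+n$ for $Q=(\IZ/p)^n$), so it cannot simply be ``absorbed''; and once you carry out the Reidemeister--Schreier argument properly you find that the five-term sequence plays no role. The clean statement to isolate is Lemma~\ref{lem:monotonicity} of the paper: a Nakayama-type inequality for images of maps between \emph{free} $\IF_pQ$-modules (not coinvariants of arbitrary modules), applied to the chain complex of the covering. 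Either prove that lemma or cite the known fact; as written, this step of your part~(1) is a gap.
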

\noindent
We note that for finitely presented groups Theorem~\ref
{cor:The_first_L2-Betti_number_and_F_p-approximation}(2) is a straightforward consequence
of Theorem~\ref{thm:Lueckapproximation}.

We provide two different proofs of Theorem~\ref
{cor:The_first_L2-Betti_number_and_F_p-approximation}.
First, Theorem~\ref{cor:The_first_L2-Betti_number_and_F_p-approximation}
is a special case of Theorem~\ref{the:The_first_L2-Betti_number_and_F_p-approximation},  which will be proved in Section~\ref{sec:The_first_L2-Betti_number_and_approximation_in_prime_characteristic}.
  An alternative proof of Theorem~\ref{cor:The_first_L2-Betti_number_and_F_p-approximation} given
in Section~\ref{sec:Alternative_proof_of_main_Corollary} will be based
on Theorem~\ref{thm:Lueck_notes}. The latter may be of independent interest
and has another important corollary, which can be considered as an extension of Theorem~\ref{thm:Lueckapproximation} to groups which are finitely presented, but not necessarily residually finite.
Here is a slightly simplified version of Theorem~\ref{thm:Lueck_notes}.
\begin{theorem}
\label{thm:Lueck_notes_simplified}
Let $G$ be a finitely presented group, and let $K$ be the kernel
of the canonical map from $G$ to its profinite completion or
pro-$p$ completion for some prime $p$. Let
$(G_i)$ be a descending chain of finite index normal subgroups of $G$ such
that $\cap_{i\in\IN} G_i=K$ (note that such a chain always exists).
 Then
$$b_1^{(2)}(G/K)=\lim_{i\to\infty} \frac{b_1(G_i)}{[G:G_i]}.
$$
\end{theorem}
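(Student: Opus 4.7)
Write $L = G/K$. The strategy is to interpret $\lim_i b_1(G_i)/[G:G_i]$ as an $L^2$-dimension for an equivariant $L$-CW complex via Farber's approximation, convert this to an $L^2$-dimension of $H_1(G;\mathcal N(L))$, and finally identify it with $b_1^{(2)}(L)$ via the Hochschild--Serre spectral sequence using the defining property of $K$. Since $G$ is finitely presented, fix a finite $2$-complex $X$ with $\pi_1(X)\cong G$; let $\widetilde X$ be its universal cover and set $Y := \widetilde X/K$. Then $Y$ is a free $L$-CW complex of finite type with $L\backslash Y = X$; the chain $(G_i/K)_i$ is a Farber chain in $L$ (it has trivial intersection); $[L:G_i/K] = [G:G_i]$; and $Y/(G_i/K) \cong \widetilde X/G_i$ is a finite $2$-complex with fundamental group $G_i$, so $b_1(Y/(G_i/K)) = b_1(G_i)$. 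Farber's approximation theorem applied to the free cocompact $L$-CW complex $Y$ with the chain $(G_i/K)_i$ then yields the existence of the limit and
\[
\lim_{i\to\infty}\frac{b_1(G_i)}{[G:G_i]} = \dim_{\mathcal{N}(L)} H_1\bigl(C_*(Y) \otimes_{\IZ L} \mathcal{N}(L)\bigr).
\]
The pro-$p$ case is handled identically: a $p$-chain with trivial intersection in the residually-$p$ group $L$ is again a Farber chain.

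Since $\widetilde X$ is simply connected, $C_*(\widetilde X)$ is a free $\IZ G$-chain complex whose augmentation to $\IZ$ is exact in degrees $\le 1$; hence for every $\IZ G$-module $M$ one has $H_1(C_*(\widetilde X) \otimes_{\IZ G} M) \cong H_1(G;M)$. Applying this with $M = \mathcal N(L)$ (a $\IZ G$-module via $G \twoheadrightarrow L$) and using $C_*(\widetilde X) \otimes_{\IZ G} \mathcal N(L) \cong C_*(Y) \otimes_{\IZ L} \mathcal N(L)$, the right-hand side above becomes $\dim_{\mathcal N(L)} H_1(G;\mathcal N(L))$. The Hochschild--Serre five-term exact sequence for $1 \to K \to G \to L \to 1$ with coefficients in $\mathcal N(L)$---noting that $K$ acts trivially, so $H_1(K;\mathcal N(L)) \cong K^{\mathrm{ab}}\otimes\mathcal N(L)$---reads
\[
K^{\mathrm{ab}}\otimes_{\IZ L}\mathcal{N}(L) \xrightarrow{\alpha} H_1(G;\mathcal{N}(L)) \to H_1(L;\mathcal{N}(L)) \to 0.
\]
By additivity of $\dim_{\mathcal N(L)}$ on short exact sequences, it suffices to show $\dim_{\mathcal N(L)} \mathrm{Im}(\alpha) = 0$. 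By naturality of the HS edge map, the image of $\alpha$ is controlled by the image $T$ of the composition $K \hookrightarrow G \twoheadrightarrow G^{\mathrm{ab}}$ after tensoring with $\mathcal N(L)$. The defining property of $K$---as the kernel of $G \to \widehat G$ or $G \to \widehat G_p$---combined with the fact that the finitely generated abelian group $G^{\mathrm{ab}}$ embeds into its profinite completion (respectively into its pro-$p$ completion modulo finite prime-to-$p$ torsion) forces $T$ to be trivial (respectively finite). Since finite torsion abelian groups vanish after $\otimes_{\IZ}\mathcal N(L)$ (as $\mathcal N(L)$ is a $\IC$-algebra), we conclude $\mathrm{Im}(\alpha) = 0$, and thus
\[
\lim_{i\to\infty}\frac{b_1(G_i)}{[G:G_i]} = \dim_{\mathcal N(L)} H_1(L;\mathcal N(L)) = b_1^{(2)}(L) = b_1^{(2)}(G/K).
\]

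\textbf{Main obstacle.} The delicate point is justifying, via a careful naturality argument for the Hochschild--Serre edge homomorphism with respect to the non-trivial $G$-module structure on $\mathcal N(L)$, that the image of $\alpha$ is indeed controlled by $T \otimes_{\IZ L} \mathcal N(L)$. Once this is established, the vanishing is easy from the profinite/pro-$p$ hypothesis on $K$ and the $\IC$-linearity of $\mathcal N(L)$. A secondary technical issue, in the pro-$p$ case, is keeping track of the finite prime-to-$p$ torsion in $T$, but this is absorbed harmlessly by the tensor with $\mathcal N(L)$.
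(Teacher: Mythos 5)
Your first two steps are sound and essentially reproduce the first half of the paper's argument: the approximation theorem applied to the cocompact free $L$-CW complex $Y$ gives $\lim_i b_1(G_i)/[G:G_i]=\dim_{\caln(L)}H_1(G;\caln(L))$, and the five-term sequence gives the surjection onto $H_1(L;\caln(L))$. But the step you flag as the "main obstacle" is not merely delicate --- it is false. With non-trivial coefficients, the class of a cycle $m\otimes[k]$ in $H_1(G;\caln(L))$ is \emph{not} determined by the image of $k$ in $G^{\mathrm{ab}}$: the bar-complex boundary gives $m\otimes[gh]\sim mg\otimes[h]+m\otimes[g]$, not $m\otimes[h]+m\otimes[g]$, so commutators need not die. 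Concretely, take $G=F_2$, $K=[G,G]$, $L=\IZ^2$. Then $T=\mathrm{im}(K\to G^{\mathrm{ab}})=0$, yet $H_1(G;\caln(L))=\ker\bigl(\caln(\IZ^2)^2\to\caln(\IZ^2)\bigr)$, $(x,y)\mapsto x(a-1)+y(b-1)$, which has $\caln(\IZ^2)$-dimension $1$, while $\dim_{\caln(L)}H_1(L;\caln(L))=b_1^{(2)}(\IZ^2)=0$; hence $\dim_{\caln(L)}\mathrm{Im}(\alpha)=1$. This matches the fact that the theorem genuinely fails for such a $K$ (for $G_i$ the preimage of $(2^i\IZ)^2$ one gets $\lim_i b_1(G_i)/[G:G_i]=1\neq 0=b_1^{(2)}(G/K)$), so the hypothesis that $K$ is the kernel of the map to the profinite (or pro-$p$) completion must be used through more than the image of $K$ in $G^{\mathrm{ab}}$.

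The paper closes exactly this gap by a different mechanism. Having shown $\lim_i b_1(G_i)/[G:G_i]=b_1^{(2)}(K\backslash EG;\caln(L))\ge b_1^{(2)}(L)$, it gets the reverse inequality from the L\"uck--Osin bound $\lim_i b_1(G_i/K)/[G:G_i]\le b_1^{(2)}(L)$ together with the group-theoretic fact that $K\subseteq G_i'$ for every $i$, where $G_i'=\ker\bigl(G_i\to H_1(G_i)/\tors(H_1(G_i))\bigr)$; this yields $b_1(G_i/K)=b_1(G_i)$ and collapses the sandwich. Proving $K\subseteq G_i'$ uses the finite quotients $G/G_i'G_i^{k^m}$, i.e.\ quotients detecting the free part of $H_1$ of the \emph{subgroups} $G_i$, which is precisely the information your argument throws away by passing only to $G^{\mathrm{ab}}$. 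To salvage your route you would have to prove $\dim_{\caln(L)}\mathrm{Im}(\alpha)=0$ using that $K$ dies in all finite ($p$-)quotients of $G$, and any such proof will in effect reconstruct the paper's subgroup argument.
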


\subsection{Connection with rank gradient}

Let $G$ be a finitely generated group. In the sequel we denote by $d(G)$ the
minimal number of generators, sometimes also called \emph{the rank of $G$}.
Let $(G_i)_{i\in\IN}$ be a descending chain of finite index normal
subgroups of $G$. The \emph{rank gradient of $G$} (with
respect to $(G_i)$), denoted by $\RG(G;(G_i))$, is defined by
\begin{eqnarray}
\RG(G;(G_i)) &= & \lim_{i \to\infty} \frac{d(G_i) -1}{[G:G_i]}.
\label{rank_gradient}
\end{eqnarray}
The above limit always exists since for any finite index subgroup $H$ of $G$ one has $\frac{d(H)-1}{[G:H]} \le d(G)-1$ by the Schreier index formula.

Rank gradient was originally introduced by Lackenby~\cite{Lackenby(2005expanders)} as a tool for studying 3-manifold groups, but is also interesting from a purely group-theoretic point of view
(see, e.g., \cite{Abert-Jaikin-Zapirain-Nikolov(2011),Abert-Nikolov(2012),Osin(2011_rankgradient),Schlage-Puchta(2012)}).

Provided that $G$ is infinite and
$\bigcap_{i\in \IN} G_i=\{1\}$, the following inequalities
are known to hold:
\begin{equation}
\label{inequalities1}
\RG(G;(G_i))\geq {\rm cost}(G)-1\geq b_1^{(2)}(G).
\end{equation}
The first inequality was proved by Ab\'ert and Nikolov~\cite[Theorem~1]{Abert-Nikolov(2012)}, and the second one is due
to Gaboriau~\cite[Corollaire~3.16,~3.23]{Gaboriau(2002a)} (see \cite{Gaboriau(2000b),Gaboriau(2002a),Gaboriau(2002b)}
for the definition and some key results about cost).

It is not known if either inequality in \eqref{inequalities1}
can be strict. In particular, the following question is open.

\begin{question}
\label{que_RG_L2Betti}
Let $G$ be an infinite finitely generated residually finite group
and $(G_i)$ a descending chain of finite index  normal subgroups of $G$
with $\cap_{i\in \IN} G_i=\{1\}$. Is it always true that
$$\RG(G;(G_i))= b_1^{(2)}(G)?$$
\end{question}
\noindent
Theorem~\ref{cor:The_first_L2-Betti_number_and_F_p-approximation}
provides a potentially new approach for answering Question~\ref{que_RG_L2Betti}
in the negative, as explained below.

In view of the obvious inequality $d(H)\geq b_1(H;K)$ for any group $H$
and any field $K$, one always has
$\RG(G;(G_i)) \geq \limsup_{i \to\infty} \frac{b_1(G_i;K)}{[G:G_i]}$.

\begin{question}\label{que:RG_and_limits_of_Betti_numbers}
For which infinite finitely generated groups $G$, finite index normal chains $(G_i)_{i \in \IN}$ 
with $\bigcap_{i \in \IN} G_i = \{1\}$ and fields $K$, do  we have
\begin{equation}
\label{RG_Betti_equality}
\RG(G;(G_i)) =  \limsup_{i \to\infty} \frac{b_1(G_i;K)}{[G:G_i]}?
\end{equation}
\end{question}
\begin{remark} Since for a group $H$, the first Betti number $b_1(H;K)$
depends only on the characteristic of $K$, one can assume
that $K=\IQ$ or $K=\IF_p$ for some $p$. The same remark applies to
Question~\ref{que:limit_independent_of_chain} below.
\end{remark}

Note that if $K=\IQ$, equality \eqref{RG_Betti_equality} does not hold in general --
if it did, Theorem~\ref{Qappr_nolimit} would have implied the existence of a group $G$
and a finite index normal chain $(G_i)$ in $G$ for which the sequence $\left(\frac{d(G_i)-1}{[G:G_i]}\right)_i$
has no limit, which is impossible since this sequence is monotone decreasing. 
If one can find a group $G$ for which \eqref{RG_Betti_equality}
fails with $K=\IF_p$ and $(G_i)$ a $p$-chain, then in view of
Theorem~\ref{cor:The_first_L2-Betti_number_and_F_p-approximation} such group $G$ would answer
Question~\ref{que_RG_L2Betti} in the negative.

The answer to Question~\ref{que:RG_and_limits_of_Betti_numbers}
would become negative if we drop the assumption $\cap G_i=\{1\}$
even if $G$ is finitely presented and $(G_i)$ is a $p$-chain (with infinitely
many distinct terms), as we will see in Section~\ref{sec:A_counterexample_with_non-trivial_intersection}.

\subsection{Independence of the chain}

So far we discussed the dependence of the quantity $\limsup_{i \to\infty} \frac{b_1(G_i;K)}{[G:G_i]}$
on the field $K$, but perhaps an even more important question is when it is independent of the chain.
Again it is reasonable to require that $\bigcap_{i \in \IN} G_i = \{1\}$ since without this
restriction the answer would be negative already for very nice groups like $F\times \IZ$,
where $F$ is a non-abelian free group. Note that independence of $\limsup_{i \to\infty} \frac{b_1(G_i;K)}{[G:G_i]}$ of the chain $(G_i)$ as above automatically implies that $\lim_{i \to\infty} \frac{b_1(G_i;K)}{[G:G_i]}$ must exist.

\begin{question}\label{que:limit_independent_of_chain}
For which finitely generated residually finite groups $G$ and fields $K$  does the limit $\lim_{i \to\infty} \frac{b_1(G_i;K)}{[G:G_i]}$
exist for all finite index normal chains $(G_i)_{i\in \IN}$ with $\bigcap_{i \in \IN} G_i = \{1\}$ and is independent of the choice of the chain $(G_i)$?
\end{question}

The answer to Question~\ref{que:limit_independent_of_chain} is known to be positive
if $K=\IQ$ and either $G$ is finitely presented (by Theorem~\ref{thm:Lueckapproximation})
or $G$ is a limit of left orderable amenable groups in the space of marked
group presentations, in which case equality \eqref{Lueck_approx_equality} holds
by \cite[Corollary~1.5]{Osin-Thom(2013)}. Question~\ref{que:limit_independent_of_chain}
remains open if $G$ is finitely presented and $K=\IF_p$. If $G$ is arbitrary, the answer may be
negative  for any $K$ -- this follows directly from Theorem~\ref{Qappr_nolimit} if $K=\IQ$ and
from its stronger version Theorem~\ref{thm:Qapproximation} if $K=\IF_p$.
In the latter case, however, it is natural to impose the additional
assumption that $(G_i)$ is a $p$-chain, which does not hold in our examples.

Essentially the only case when answer to Question~\ref{que:limit_independent_of_chain} is known
to be positive for all fields is when $G$ contains a normal infinite amenable
subgroup (e.g., if $G$ itself is infinite amenable). In this case, $\RG(G;(G_i)) = 0$ for all finite index normal chains $(G_i)$ with trivial intersection, as proved by Lackenby~\cite[Theorem~1.2]{Lackenby(2005expanders)}
when $G$ is finitely presented and by Ab\'ert and Nikolov~\cite[Theorem~3]{Abert-Nikolov(2012)}
in general. This, of course, implies that in such groups
$\lim_{i \to\infty} \frac{b_1(G_i;K)}{[G:G_i]}=0$
for any such chain $(G_i)$ and hence the answer to
Questions~\ref{que:RG_and_limits_of_Betti_numbers}
and~\ref{que:limit_independent_of_chain} is positive.

Finally, we comment on the status of a more general version of Question~\ref{que:limit_independent_of_chain}:

\begin{question}\label{que:approximation_in_prime_characteristic}
  For which residually finite groups $G$, fields $K$, finite index normal chains $(G_i)$ with
  $\bigcap_{i  \in \IN} G_i = \{1\}$, free $G$-$CW$-complexes $X$ of finite type and natural
  numbers $n$, does the limit
  $\lim_{i \to \infty} \frac{b_n(G_i\backslash X;K))}{[G:G_i]}$ exist and is independent of the chain?
\end{question}

Again, if $K$ has characteristic zero, the answer is always yes and the limit can be identified with
the $n$-th $L^2$-Betti number  $b_n^{(2)}(X;\caln(G))$
 (see~\cite{Lueck(1994c)} or~\cite[Theorem~13.3~(2) on page~454]{Lueck(2002)},
which is a generalization of Theorem~\ref{thm:Lueckapproximation}).
If $K$ has positive characteristic, the answer is yes if $G$ is virtually torsion-free elementary amenable, in which case the limit can be identified 
with the Ore dimension of $H_n(X;K)$ (see~\cite[Theorem~5.3]{Linnell-Lueck-Sauer(2011)});
the answer is also yes for any finitely generated amenable group $G$ -- this follows from 
\cite[Theorem~17]{Abert-Jaikin-Zapirain-Nikolov(2011)} or \cite[Theorem~2.1]{Linnell-Lueck-Sauer(2011)} --
and the limit can be described using Elek dimension function (see \cite{Elek(2003c)}). There are examples
for $G = \IZ$ of finite $G$-$CW$-complexes $X$ where the limits 
$\lim_{i \to \infty} \frac{b_n(G_i\backslash X;K))}{[G:G_i]}$ are different for $K = \IQ$ and $K = \IF_p$ (but $X$ is not $EG$), see~\cite[Example~6.2]{Linnell-Lueck-Sauer(2011)}.

\subsection{Acknowledgments}
The authors want to thank the American Institute of Mathematics for its hospitality
during their stay at the Workshop ``$L^2$-invariants and their relatives for finitely generated groups''
organized by Mikl\'os Ab\'ert, Mark Sapir, and Dimitri Shlyakhtenko
in September 2011, where some of the ideas of this paper were developed.
The authors are very grateful to Denis Osin for proposing several improvements
in Section~\ref{sec:A_counterexample_with_non-trivial_intersection} and other
useful discussions. The first author is very grateful to Andrei Jaikin-Zapirain for many helpful discussions
related to the subject of this paper, sending his unpublished work
``On $p$-gradient of finitely presented groups'' and suggesting a stronger version
of Theorem~\ref{thm:Lueck_notes}(2).


 \typeout{--   Section 2: The first $L^2$-Betti number and approximation in prime characteristic ---------------}

\section{The first $L^2$-Betti number and approximation in prime characteristic}
\label{sec:The_first_L2-Betti_number_and_approximation_in_prime_characteristic}

If $G$ is a group  and $X$ a $G$-$CW$-complex, we denote by
\begin{eqnarray}
b_n^{(2)}(X; \caln(G))
& = &
\dim_{\caln(G)}\bigl(H_n(\caln(G) \otimes_{\IZ G} C_*(X))\bigr)
\label{n-th_L2-Betti_number}
\end{eqnarray}
its \emph{$n$-th $L^2$-Betti number}. Here $C_*(X)$ is the cellular $\IZ G$-chain complex of $X$,
$\caln(G)$ is the group von Neumann algebra and $\dim_{\caln(G)}$ is the dimension function
for (algebraic) $\caln(G)$-modules in the sense
of~\cite[Theorem~6.7 on page~239]{Lueck(2002)}. Notice that
$b_1^{(2)}(G) = b_1^{(2)}(EG;\caln(G))$.

The goal of this section is to prove the following theorem which generalizes Theorem~\ref{cor:The_first_L2-Betti_number_and_F_p-approximation}:

\begin{theorem}[The first $L^2$-Betti number and $\IF_p$-approximation]
  \label{the:The_first_L2-Betti_number_and_F_p-approximation}
  Let $p$ be a prime number. Let $G$ be a finitely generated group
and $(G_i)$ a descending chain of normal subgroups of $p$-power index in $G$.
Let $K= \bigcap_{i\in \IN} G_i$.
Then the sequence $\left(\frac{b_1(G_i;\IF_p)}{[G:G_i]}\right)_i$ is monotone
decreasing, the limit $\lim_{i \to \infty} \frac{b_1(G_i;\IF_p)}{[G:G_i]}$
exists and satisfies
  \[
  b_1^{(2)}\bigl(K\backslash EG;\caln(G/K)\bigr) \le \lim_{i \to \infty}
  \frac{b_1(G_i;\IF_p)}{[G:G_i]}.
  \]
\end{theorem}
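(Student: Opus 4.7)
The statement has two parts: monotonicity (hence convergence) of the sequence $\bigl(\tfrac{b_1(G_i;\IF_p)}{[G:G_i]}\bigr)_i$, and the $L^2$-inequality. I would prove monotonicity by a pro-$p$ Schreier argument, and the inequality by comparing chain-level presentations of $b_1^{(2)}(K\backslash EG;\caln(G/K))$ and $b_1(G_i;\IF_p)$ and exploiting a rank-drop inequality between $\IQ$ and $\IF_p$.

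\textbf{Monotonicity.} I would establish the Schreier-type bound
\[
b_1(N;\IF_p) - 1 \,\le\, [G:N]\bigl(b_1(G;\IF_p)-1\bigr)
\]
for any finitely generated group $G$ and normal subgroup $N\trianglelefteq G$ with $G/N$ a finite $p$-group. Applied with $(G_i,G_{i+1})$ in place of $(G,N)$, this yields the claimed monotonicity. The bound is proved by passing to the pro-$p$ completion $\widehat G_p$ and identifying the closure $\overline N$ of (the image of) $N$ in $\widehat G_p$ with the pro-$p$ completion $\widehat N_p$ of $N$: given any finite $p$-quotient $N\twoheadrightarrow P$ with kernel $M$, the $G$-invariant core $\bigcap_{g\in G} gMg^{-1}$ of $M$ still has $p$-power index in $N$ (as a finite intersection of $p$-power-index subgroups, since $G/N$ is finite), and the map $G \twoheadrightarrow G/\bigcap_g gMg^{-1}$ is a finite $p$-quotient whose restriction to $N$ dominates $P$; thus $\widehat N_p = \overline N$. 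The inequality then follows from the pro-$p$ Schreier formula applied to $\overline N \le \widehat G_p$, together with $d(\widehat G_p) = b_1(G;\IF_p)$ and $d(\widehat N_p) = b_1(N;\IF_p)$.

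\textbf{The $L^2$-inequality.} Choose a $G$-$CW$-model of $EG$ with one $0$-cell and $d = d(G)$ equivariant $1$-cells, yielding a cellular chain complex $\cdots \to C_2 \xrightarrow{\partial_2} \IZ[G]^d \xrightarrow{\partial_1} \IZ[G] \to 0$ with $\partial_1(e_j) = g_j - 1$. Base-changing along $\IZ[G] \to \caln(G/K)$ and along $\IZ[G]\to\IF_p[G/G_i]$ computes $H_1(K\backslash EG;\caln(G/K))$ and $H_1(G_i;\IF_p)$ respectively, each as a difference of a kernel dimension and an image dimension of the base-changed $\partial_1,\partial_2$. The inequality then reduces to the two comparisons
\begin{align*}
\dim_{\caln(G/K)}\ker\bar\partial_1 &\le \liminf_i \tfrac{\dim_{\IF_p}\ker\partial_1^{G_i}}{[G:G_i]}, \\
\dim_{\caln(G/K)}\operatorname{im}\bar\partial_2 &\ge \limsup_i \tfrac{\dim_{\IF_p}\operatorname{im}\partial_2^{G_i}}{[G:G_i]}.
\end{align*}
Each of these should follow from (i) a version of L\"uck's approximation, a generalization of \cite[Theorem~13.3]{Lueck(2002)} to a chain with intersection $K \ne \{1\}$, asserting $\dim_{\caln(G/K)}\operatorname{im}(\phi\otimes\caln(G/K)) = \lim_i \tfrac{\operatorname{rk}_\IQ(\phi\otimes\IQ[G/G_i])}{[G:G_i]}$ for matrices $\phi$ between finitely generated free $\IZ[G]$-modules, combined with (ii) the elementary inequality $\operatorname{rk}_{\IF_p}(A\bmod p) \le \operatorname{rk}_\IQ(A)$ for every integer matrix $A$. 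Applying the kernel inequality to $\partial_1$, the image inequality to $\partial_2$, and subtracting, gives the desired bound.

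\textbf{Main obstacle.} The most delicate step is the treatment of $\partial_2$ when $G$ is not finitely presented, in which case $\partial_2$ does not originate from finitely generated free $\IZ[G]$-modules. Writing $\partial_2$ as a directed colimit $\partial_2 = \varinjlim_j \partial_2^{(j)}$ of finitely generated sub-maps, continuity of $\dim_{\caln(G/K)}$ under increasing unions of submodules gives $\dim_{\caln(G/K)}\operatorname{im}\bar\partial_2 = \sup_j \dim_{\caln(G/K)}\operatorname{im}\bar\partial_2^{(j)}$, and for each fixed $i$ the union $\operatorname{im}\partial_2^{G_i} = \bigcup_j \operatorname{im}\partial_2^{(j), G_i}$ stabilises because the ambient module $\IF_p[G/G_i]^d$ is finite-dimensional. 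The key to completing the argument is to exchange $\sup_j$ and $\limsup_i$: a diagonal argument choosing, for each $i$, a $j(i)$ at which the $\IF_p$-image stabilises, and then controlling the uniformity of the comparison along the diagonal. This compatibility of the two limits, together with verifying input (i) in the $K \ne \{1\}$ setting, is the technical heart of the argument.
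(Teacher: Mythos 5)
Your overall architecture for the inequality is the same as the paper's: truncate $\partial_2$ to finitely generated pieces, apply L\"uck approximation to each piece, compare $\IQ$-rank with $\IF_p$-rank, and use continuity of the dimension functions under directed unions. Two remarks on the parts that work. First, your "input (i)" is not really a generalization to $K\neq\{1\}$: one simply regards $C_*(K\backslash EG)$ as a complex over $\IZ[G/K]$ and applies the standard approximation theorem to the chain $(G_i/K)$ in $G/K$, which has trivial intersection; this is exactly what the paper does. Second, your monotonicity argument via the pro-$p$ Schreier formula is correct and coincides with the paper's second proof of part (1), which quotes the bound $b_1(N;\IF_p)-1\le [G:N](b_1(G;\IF_p)-1)$ from Lackenby.

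The genuine gap is at the step you yourself flag as the "technical heart," and your proposed resolution does not work. Writing $a_{i,j}=\frac{\dim_{\IF_p}\im(\partial_2^{(j)}[i,p])}{[G:G_i]}$, what you can prove from approximation plus mod-$p$ rank drop is $\dim_{\caln(Q)}\im\bar\partial_2=\sup_j\dim_{\caln(Q)}\im\bar\partial_2^{(j)}\ge \sup_j\limsup_i a_{i,j}$, whereas what you need is $\limsup_i\sup_j a_{i,j}$; the general inequality $\sup_j\limsup_i a_{i,j}\le\limsup_i\sup_j a_{i,j}$ goes the wrong way, and no diagonal choice $j(i)$ can repair this, because there is no uniformity in $j$ of the convergence $\lim_i\frac{\rk_\IQ(\partial_2^{(j)}[i])}{[G:G_i]}=\dim_{\caln(Q)}\im\bar\partial_2^{(j)}$. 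The missing ingredient is the only place where the $p$-chain hypothesis enters the inequality (your sketch uses it solely for monotonicity): for a finite $p$-group $H$ and an $\IF_pH$-map $\alpha\colon \IF_pH^m\to\IF_pH^n$ one has $\dim_{\IF_p}(\im\alpha)\ge |H|\cdot\dim_{\IF_p}(\im(\id_{\IF_p}\otimes_{\IF_pH}\alpha))$ (the paper's Lemma~\ref{lem:monotonicity}, quoted from Bergeron--Linnell--L\"uck--Sauer). Applied to $H=Q_i/Q_{i+1}$ this shows that for each fixed $j$ the sequence $i\mapsto a_{i,j}$ is monotone increasing, so $\limsup_i a_{i,j}=\sup_i a_{i,j}$ and the exchange becomes the trivial identity $\sup_j\sup_i=\sup_i\sup_j$. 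Without this lemma the claimed inequality between the two iterated limits is false in general (and indeed the theorem's conclusion is not available for non-$p$-chains), so your proof is incomplete until you supply it.
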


For its proof we will need the following lemma, which is proved 
in~\cite[Lemma~4.1]{Bergeron-Linnell-Lueck-Sauer(2012)}, although it was probably
well known before.

\begin{lemma} \label{lem:monotonicity}
Let $p$ be a prime and $m,n$ positive integers. Let $H$ be a finite $p$-group.
Consider an $\IF_pH$-map $\alpha \colon \IF_pH^m \to \IF_pH^n$.
Define the $\IF_p$-map
\[
\overline{\alpha}
= \id_{\IF_p} \otimes_{\IF_pH} \alpha \colon \IF_p^m
= \IF_p\otimes_{\IF_pH} \IF_pH^m  \to   \IF_p^n = \IF_p\otimes_{\IF_pH}  \IF_pH^n,
\]
where we consider $\IF_p$ as $\IF_p H$-module by the trivial $H$-action. Then
\[
\dim_{\IF_p}(\im(\alpha)) \ge |H| \cdot \dim_{\IF_p}(\im(\overline{\alpha})).
\]
\end{lemma}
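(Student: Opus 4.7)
The plan is to exploit that $R := \IF_p H$ is a local Artinian ring whose unique maximal ideal is the augmentation ideal $I$ and whose residue field is $R/I \cong \IF_p$. Set $d = \dim_{\IF_p}(\im(\overline{\alpha}))$ and choose $v_1, \ldots, v_d \in \im(\alpha) \subseteq R^n$ whose reductions modulo $IR^n$ form a basis of $\im(\overline{\alpha}) \subseteq \IF_p^n$. Let $\psi \colon R^d \to R^n$ be the $R$-linear map sending the $i$-th standard basis vector $e_i$ to $v_i$. Since $\im(\psi) \subseteq \im(\alpha)$, it suffices to prove that $\psi$ is injective: this yields $\im(\psi) \cong R^d$ and hence $\dim_{\IF_p}(\im(\alpha)) \geq \dim_{\IF_p}(\im(\psi)) = d \cdot |H|$, which is exactly the desired inequality.

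By construction the induced map $\overline{\psi} \colon \IF_p^d \to \IF_p^n$ sends the standard basis to the linearly independent vectors $\bar v_1, \ldots, \bar v_d$ and is therefore injective. The heart of the argument, and the place where the hypothesis that $H$ is a finite $p$-group enters, is the following claim: whenever $\overline{\psi}$ is injective, so is $\psi$.

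To prove the claim I would use that $R$ is a local Frobenius algebra over $\IF_p$, so its socle is one-dimensional over $\IF_p$ and spanned by the central element $N := \sum_{h \in H} h$. Consequently the socle of $R^d$ is $N R^d$, with $\IF_p$-basis $N e_1, \ldots, N e_d$. If $\psi$ had a nontrivial kernel, then since $R^d$ is Artinian, $\ker(\psi)$ would meet the socle nontrivially, producing a nonzero element $\sum_i c_i N e_i \in \ker(\psi)$ with $c_i \in \IF_p$. Centrality of $N$ then gives $0 = \psi\bigl(\sum_i c_i N e_i\bigr) = N \cdot w$ where $w := \sum_i c_i v_i \in R^n$. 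The elementary identity $N a = \epsilon(a) N$ for $a \in R$ (with $\epsilon$ the augmentation) implies that $N \cdot w = 0$ if and only if $w \in IR^n$, i.e.\ $\bar w = 0$ in $\IF_p^n$. But $\bar w = \sum_i c_i \bar v_i$, and the linear independence of $\bar v_1, \ldots, \bar v_d$ forces all $c_i = 0$, a contradiction. The main (and essentially only) obstacle is this socle computation; once injectivity of $\psi$ is secured, the dimension bound is immediate.
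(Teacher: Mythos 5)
Your proof is correct. Note first that the paper itself does not prove this lemma; it cites \cite[Lemma~4.1]{Bergeron-Linnell-Lueck-Sauer(2012)}. The argument there is dual to yours: one passes to cokernels rather than images. Since $\IF_p\otimes_{\IF_pH}(-)$ is right exact, $\coker(\overline{\alpha})\cong \IF_p\otimes_{\IF_pH}\coker(\alpha)$, and for any finitely generated module $C$ over the local ring $R=\IF_pH$ one has $\dim_{\IF_p}(C)\le |H|\cdot\dim_{\IF_p}(C/IC)$ because Nakayama's lemma exhibits $C$ as a quotient of $R^{\dim_{\IF_p}(C/IC)}$; a rank count ($\dim\im = \mathrm{(size\ of\ target)} - \dim\coker$ on both sides) then yields the inequality. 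That route needs only that $R$ is local with residue field $\IF_p$ (which is exactly where the $p$-group hypothesis enters) plus Nakayama. Your route instead proves the sharper-looking statement that $\im(\alpha)$ contains a \emph{free} submodule of rank $d=\dim_{\IF_p}\im(\overline{\alpha})$, and for that you invoke in addition the self-injectivity/Frobenius property of $\IF_pH$ to identify $\operatorname{soc}(R)=\IF_p\cdot N$ with $N=\sum_{h\in H}h$; every step checks out (in particular $N a=\epsilon(a)N$, hence $Nw=0\Leftrightarrow \bar w=0$, and a nonzero submodule of a finite-length module meets the socle). Two small remarks: the socle computation can be done bare-handed, since $\{x\in R: hx=x\ \forall h\in H\}=\IF_p N$ by comparing coefficients, so you need not appeal to the Frobenius property; and alternatively, injectivity of $\psi$ follows from the standard local-ring fact that elements of $R^n$ which are independent modulo $IR^n$ extend to a basis of $R^n$ (extend the $\bar v_i$ to a basis of $\IF_p^n$, lift, get a generating set of $R^n$ by Nakayama, and conclude it is a basis by length count). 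Either way, your argument is complete and correctly isolates where the $p$-group hypothesis is used.
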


Notice that the assertion of Lemma~\ref{lem:monotonicity} is not true
if we do not require that $H$ is a $p$-group or if we replace $\IF_p$ by
a field of characteristic not equal to $p$.

\begin{proof}[Proof of
  Theorem~\ref{the:The_first_L2-Betti_number_and_F_p-approximation}]
  Since $G$ is finitely generated, there is a $CW$-model for $BG$ with one
  $0$-cell and a finite number, let us say $s$, of $1$-cells.  Let $EG \to BG$
  be the universal covering. Put $X = K\backslash EG$ and $Q= G/K$. Then $X$ is a free
  $Q$-$CW$-complex with finite $1$-skeleton. Its cellular $\IZ Q$-chain complex
  $C_*(X)$ looks like
  \[
  \cdots \to C_2(X) = \bigoplus_{j=1}^{r} \IZ Q\xrightarrow{c_2} C_1(X) =
  \bigoplus_{j=1}^s \IZ Q \xrightarrow{c_1}  C_0(X) = \IZ Q
  \]
  where $r$ is a finite number or infinity.

For $m = 0,1,2, \ldots$ we define a
  $\IZ Q$-submodule of $C_2(X)$ by $C_2(X)|_m = \bigoplus_{j= 1}^{\max\{m,r\}} \IZ Q$.
  Denote by $c_2|_m \colon C_2(X)|_m \to C_1(X)$ the restriction of
  $c_2$ to $C_2(X)|_m$.

  Consider a $\IZ Q$-map $f \colon M\to N$. Denote by
  $f^{(2)} \colon M^{(2)} \to N^{(2)}$ the $\caln(Q)$-homomorphism
  $\id_{\caln(G)}   \otimes_{\IZ Q} f \colon \caln(Q) \otimes_{\IZ Q} M \to \caln(Q) \otimes_{\IZ Q}  N$.
  Put $Q_i = G_i/K$. Let $f[i] \colon M[i] \to N[i]$ be the $\IQ$-homomorphism
   $\id_{\IQ}   \otimes f \colon \IQ \otimes_{\IZ[Q_i]} M \to \IQ \otimes_{\IZ[Q_i]} N$. Denote
  by $f[i,p] \colon M[i,p] \to N[i,p]$  the $\IF_p$-homomorphism
  $\id_{\IF_p}   \otimes_{\IZ[Q_i]} f \colon \IF_p \otimes_{\IZ[Q_i]} M \to \IF_p \otimes_{\IZ[Q_i]} N$.
  If $M = \bigoplus_{j=1}^t \IZ Q$, then $M^{(2)} = \bigoplus_{j=1}^t \caln(Q)$,
  $M[i] = \bigoplus_{j=1}^t \IZ [Q/Q_i]$ and $M[i,p] = \bigoplus_{j=1}^t \IF_p[Q/Q_i]$.
  
  Note that
\begin{equation*}
\label{Betti_equality} 
b_1(Q_i\backslash X;\IF_p)=b_1(G_i\backslash EG;\IF_p)=b_1(BG_i;\IF_p)=
b_1(G_i;\IF_p).
\end{equation*}
  Since all dimension functions are additive (see~\cite[Theorem~6.7 on page~239]{Lueck(2002)}),
we conclude
  \begin{eqnarray}
  b_1^{(2)}\bigl(X;\caln(Q)\bigr) & = & s - 1 - \dim_{\caln(Q)}\bigl(\im(c_2^{(2)})\bigr);
  \label{b_1(2)_and_im}
  \\
  \frac{b_1\bigl(G_i;\IF_p)}{[Q:Q_i]}
  & = &
  s - 1 - \frac{\dim_{\IF_p}\bigl(\im(c_2[i,p])\bigr)}{[Q:Q_i]};
  \label{b_1(F_p)_and_im}
  \\
   \dim_{\caln(Q)}\bigl(\im(c_2|_m^{(2)})\bigr)
   & = &
   m - \dim_{\caln(Q)}\bigl(\ker(c_2|_m^{(2)})\bigr);
  \label{im_ker_(2)}
  \\
  \frac{\dim_{\IQ}\bigl(\im(c_2|_m [i])\bigr)}{[Q:Q_i]}
   & = &
   m - \frac{\dim_{\IQ}\bigl(\ker(c_2|_m [i])\bigr)}{[Q:Q_i]};
  \label{im_and_ker_i}
  \\
   \frac{\dim_{\IF_p}\bigl(\im(c_2|_m[i,p])\bigr)}{[Q:Q_i]} & = & m - \frac{\dim_{\IF_p}\bigl(\ker(c_2|_m[i,p])\bigr)}{[Q:Q_i]}.
  \label{im_and_ker_i_p}
 \end{eqnarray}
There is an  isomorphism of $\IF_p$-chain complexes
$\IF_p \otimes_{\IF_p[Q_{i+1}\backslash Q_i]}  C_*(X)[(i+1),p]  \xrightarrow{\cong} C_*(X)[i,p]$,
where the $Q_{i+1}\backslash Q_i$-operation on $C_*(X)[i+1]$ comes from the identification
$C_*(X)[i+1] = \IF_p \otimes_{\IF_p[Q_{i+1}]} C_*(X) = \IF_p[Q_{i+1}\backslash Q] \otimes_{\IF_p Q} C_*(X)$.
This is compatible with the passage from $C_2(X)$ to $C_2(X)|_m$. Hence $c_2|_m[i,p]$ can be identified with
$\id_{\IF_p}  \otimes_{\IF_p[Q_{i+1}\backslash Q_i]} c_2|_m[(i+1),p]$. Since $Q_{i+1}\backslash Q_i$ is a finite $p$-group,
Lemma~\ref{lem:monotonicity} implies
\begin{eqnarray*}
\dim_{\IF_p}\bigl(\im(c_2|_m[(i+1),p])\bigr)
& \ge &  [Q_i:Q_{i+1}] \cdot \dim_{\IF_p}\bigl(\im(c_2|_m[i,p])\bigr).
\end{eqnarray*}
We conclude
\begin{eqnarray}
\frac{\dim_{\IF_p}\bigl(\im(c_2|_m[(i+1),p])\bigr)}{[Q:Q_{i+1}]}
& \ge &
\frac{\dim_{\IF_p}\bigl(\im(c_2|_m[i,p])\bigr)}{[Q:Q_i]}.
\label{monotonicity}
\end{eqnarray}
Since $\im(c_2^{(2)}) = \bigcup_{m} \im(c_2|_m^{(2)})$ and
$\im(c_2[i,p]) = \bigcup_{m} \im(c_2|_m[i,p])$ and the dimension functions are compatible with directed unions
(see~\cite[Theorem~6.7 on page~239]{Lueck(2002)}), we get
\begin{eqnarray}
\dim_{\caln(Q)}\bigl(\im(c_2^{(2)})\bigr) & = &
\lim_{m \to \infty} \dim_{\caln(Q)}\bigl(\im(c_2|_m^{(2)})\bigr);
\label{cofinal_(2)}
\\
\dim_{\IF_p}\bigl(\im(c_2[i,p])\bigr) & = &
\lim_{m \to \infty}  \dim_{\IF_p}\bigl(\im(c_2|_m[i,p])\bigr).
\label{cofinal_F_p}
\end{eqnarray}
We conclude from~\cite[Theorem~13.3~(2) on page~454 and Lemma~13.4 on page~455]{Lueck(2002)}
\begin{eqnarray*}
\lim_{i \to \infty} \frac{\dim_{\IQ}\bigl(\ker(c_2|_m[i])\bigr) }{[Q:Q_i]}
& = &
\dim_{\caln(Q)}\bigl(\ker(c_2|_m^{(2)})\bigr).
\end{eqnarray*}
This implies together with~\eqref{im_ker_(2)} and~\eqref{im_and_ker_i}
\begin{eqnarray}
\lim_{i \to \infty} \frac{\dim_{\IQ}\bigl(\im(c_2|_m[i])\bigr) }{[Q:Q_i]}
& = &
\dim_{\caln(Q)}\bigl(\im(c_2|_m^{(2)})\bigr).
\label{classical_approximation}
\end{eqnarray}
Finally, it is easy to see that
\begin{eqnarray}
\dim_{\IQ}\bigl(\im(c_2|_m[i])\bigr)
& \ge &
\dim_{\IF_p}\bigl(\im(c_2|_m[i,p])\bigr).
\label{Q_ge_F_p}
\end{eqnarray}
\medskip
Putting everything together, we can now prove both assertions of Theorem~\ref{the:The_first_L2-Betti_number_and_F_p-approximation}. First, for a fixed $m$, the sequence $\left(\frac{\dim_{\IF_p}\bigl(\im(c_2|_m[i,p])\bigr)}{[Q:Q_i]}\right)_i$ is monotone increasing
by \eqref{monotonicity}, whence the sequence 
$\left(\frac{\dim_{\IF_p}\bigl(\im(c_2[i,p])\bigr)}{[Q:Q_i]}\right)_i$ is also monotone increasing
by \eqref{cofinal_F_p} and therefore the sequence
$\left(\frac{b_1(G_i;\IF_p)}{[Q:Q_i]}\right)_i$ is monotone decreasing by \eqref{b_1(F_p)_and_im}.
This proves the first assertion of Theorem~\ref{the:The_first_L2-Betti_number_and_F_p-approximation}
since clearly $[Q:Q_i]=[G:G_i]$.

Inequality  \eqref{monotonicity} also implies that $\lim\limits_{i \to \infty} \frac{\dim_{\IF_p}\bigl(\im(c_2|_m[i,p])\bigr)}{[Q:Q_i]}\geq 
\frac{\dim_{\IF_p}\bigl(\im(c_2|_m[j,p])\bigr)}{[Q:Q_j]}$ for any fixed $j$ and $m$, and so
\begin{equation}
\label{change_of_order}
\lim_{m\to\infty}\lim_{i \to \infty} \frac{\dim_{\IF_p}\bigl(\im(c_2|_m[i,p])\bigr)}{[Q:Q_i]}\geq 
\sup_{i\geq 0}\left\{\lim_{m \to \infty}  \frac{\dim_{\IF_p}\bigl(\im(c_2|_m[i,p])\bigr)}{[Q:Q_i]} \; \right\}.
\end{equation}
Therefore,
\begin{eqnarray*}
b_1^{(2)}(X;\caln(Q))
& \stackrel{\eqref{b_1(2)_and_im}}{=} &
s - 1 - \dim_{\caln(Q)}\bigl(\im(c_2^{(2)})\bigr)
\\
& \stackrel{\eqref{cofinal_(2)}}{=} &
s - 1 - \lim_{m \to \infty} \dim_{\caln(Q)}\bigl(\im(c_2|_m^{(2)})\bigr)
\\
& \stackrel{\eqref{classical_approximation}}{=} &
s - 1 - \lim_{m \to \infty} \lim_{i \to \infty} \frac{\dim_{\IQ}\bigl(\im(c_2|_m[i])\bigr)}{[Q:Q_i]}
\\
& \stackrel{\eqref{Q_ge_F_p}}{\le}  &
s - 1 - \lim_{m \to \infty} \lim_{i \to \infty} \frac{\dim_{\IF_p}\bigl(\im(c_2|_m[i,p])\bigr)}{[Q:Q_i]}
\\
& \stackrel{\eqref{change_of_order}}{\le} &
s - 1 - \sup_{i\geq 0}\left\{\lim_{m \to \infty}  \frac{\dim_{\IF_p}\bigl(\im(c_2|_m[i,p])\bigr)}{[Q:Q_i]}\right\}
\\
& \stackrel{\eqref{cofinal_F_p}}{=} &
s - 1 - \sup_{i\geq 0}\left\{\frac{\dim_{\IF_p}\bigl(\im(c_2[i,p])\bigr)}{[Q:Q_i]}\right\}
\\
& =  &
\inf_{i\geq 0}\left\{s -1- \frac{\dim_{\IF_p}\bigl(\im(c_2[i,p])\bigr)}{[Q:Q_i]}\right\}
\\
& \stackrel{\eqref{b_1(F_p)_and_im}}{=} &
\inf_{i\geq 0}\left\{\frac{b_1(G_i;\IF_p)}{[Q:Q_i]}\right\}.
\end{eqnarray*}
This finishes the proof of Theorem~\ref{the:The_first_L2-Betti_number_and_F_p-approximation}.
\end{proof}


 \typeout{-----------------   Section 3: Alternative proof of main Corollary ----------------------------}

\section{Alternative proof of Theorem~\ref{cor:The_first_L2-Betti_number_and_F_p-approximation}}
\label{sec:Alternative_proof_of_main_Corollary}

In this section we give an alternative proof of Theorem~\ref{cor:The_first_L2-Betti_number_and_F_p-approximation}.
Namely, Theorem~\ref{cor:The_first_L2-Betti_number_and_F_p-approximation}
is an easy consequence of the following result, which may be useful in its own right.

\begin{theorem}
\label{thm:Lueck_notes}
Let $G$ be a finitely presented group, let $(G_i)$ be a descending
chain of finite index normal subgroups of $G$, and let $K=\bigcap_{i=1}^{\infty} G_i$.
\begin{enumerate}
\item \label{thm:Lueck_notes:inequalities}
The following inequalities hold:
\begin{multline*}
\qquad \lim_{i\to\infty} \frac{b_1(G_i/K)}{[G:G_i]}
\leq b_1^{(2)}(G/K)
\leq b_1^{(2)}\bigl(K\backslash EG; \caln(G/K)\bigr)=\lim_{n\to\infty} \frac{b_1(G_i)}{[G:G_i]}.
\end{multline*}

\item \label{thm:Lueck_notes:equalities}
 Let $\mathcal{C}$ be any class of finite groups which is closed under subgroups,
extensions (and isomorphisms) and contains at least one non-trivial group (for instance, $\mathcal C$ could
be the class of all finite groups or all finite $p$-groups for a fixed prime $p$).
Assume that $K$ is the kernel of the canonical map from $G$ to its pro-$\mathcal{C}$ completion.
Then
$$b_1^{(2)}(G/K)=\lim_{i\to\infty} \frac{b_1(G_i)}{[G:G_i]}.$$
If in addition all groups $G/G_i$ are in $\mathcal C$, then
\begin{multline}
\label{equality_approximation}
\qquad \lim_{i\to\infty} \frac{b_1(G_i/K)}{[G:G_i]}
=  b_1^{(2)}(G/K)
= b_1^{(2)}\bigl(K\backslash EG; \caln(G/K)\bigr)=\lim_{i\to\infty} \frac{b_1(G_i)}{[G:G_i]}.
\end{multline}
\end{enumerate}
\end{theorem}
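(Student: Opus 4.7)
My plan for Part~(1) is to establish the three relations separately, starting with the rightmost equality.

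For the equality $b_1^{(2)}\bigl(K\backslash EG; \caln(G/K)\bigr) = \lim_{i\to\infty} \frac{b_1(G_i)}{[G:G_i]}$, since $G$ is finitely presented, $K\backslash EG$ carries a natural structure of a free $(G/K)$-$CW$-complex with finite $2$-skeleton, inherited from a finite model for the $2$-skeleton of $BG$. The chain $(G_i/K)$ in $G/K$ consists of finite-index normal subgroups with trivial intersection and $[G/K:G_i/K]=[G:G_i]$. Applying the classical L\"uck Approximation Theorem for $CW$-complexes~\cite[Theorem~13.3~(2) on page~454]{Lueck(2002)} to this data, and observing that $(G_i/K)\backslash(K\backslash EG) = G_i\backslash EG$ is a model for $BG_i$, one obtains the equality together with existence of the limit. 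The leftmost inequality is then the L\"uck--Osin inequality~\cite[Theorem~1.1]{Lueck-Osin(2011)} applied to the finitely generated residually finite group $G/K$ with its chain $(G_i/K)$ (giving $\limsup$, which is enough to read the statement).

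For the middle inequality $b_1^{(2)}(G/K) \le b_1^{(2)}\bigl(K\backslash EG; \caln(G/K)\bigr)$, I would build a $(G/K)$-$CW$ model for $E(G/K)$ by attaching free $(G/K)$-cells of dimensions $\ge 2$ to $K\backslash EG$, successively killing $\pi_1 = K$ (via $(G/K)$-$2$-cells corresponding to $(G/K)$-orbit generators of $K$) and then all higher homotopy groups. The relative cellular chain complex $D_*$ then vanishes in degrees $0$ and $1$, so after tensoring with $\caln(G/K)$ the long exact sequence in $L^2$-homology forces $H_1\bigl(\caln(G/K) \otimes_{\IZ[G/K]} D_*\bigr)=0$, hence the natural map $H_1^{(2)}\bigl(K\backslash EG; \caln(G/K)\bigr) \to H_1^{(2)}\bigl(E(G/K); \caln(G/K)\bigr) = H_1^{(2)}(G/K)$ is surjective. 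Additivity of $\dim_{\caln(G/K)}$ now yields the inequality.

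For Part~(2), the rightmost equality from Part~(1) shows that $\lim_i \frac{b_1(G_i)}{[G:G_i]} = b_1^{(2)}\bigl(K\backslash EG; \caln(G/K)\bigr)$ depends only on $K$, independent of the chain $(G_i)$ with $\bigcap_i G_i = K$. Combined with the middle inequality of Part~(1), the first assertion of Part~(2) reduces to proving the reverse inequality $b_1^{(2)}\bigl(K\backslash EG; \caln(G/K)\bigr) \le b_1^{(2)}(G/K)$ under the pro-$\mathcal{C}$ hypothesis on $K$. Exploiting chain-independence, I would switch to a chain $(N_j)$ of normal subgroups of $G$ with $G/N_j \in \mathcal{C}$ and $\bigcap_j N_j = K$; such a chain exists because $\mathcal{C}$ is closed under subgroups and extensions (hence under finite products), so enumerating the defining family $\{N \lhd G : G/N \in \mathcal{C}\}$ of $K$ and taking iterated intersections produces such a descending chain.

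For this chain, the L\"uck--Osin inequality applied to the residually $\mathcal{C}$-group $G/K$ with chain $(N_j/K)$ yields $\limsup_j \frac{b_1(N_j/K)}{[G:N_j]} \le b_1^{(2)}(G/K)$, while the $5$-term exact sequence of $1 \to K \to N_j \to N_j/K \to 1$ gives $b_1(N_j) \le b_1(N_j/K) + \dim_{\IQ} H_1(K;\IQ)_{N_j/K}$. The main obstacle is then the asymptotic growth estimate $\lim_j \frac{\dim_{\IQ} H_1(K;\IQ)_{N_j/K}}{[G:N_j]} = 0$, which I expect to follow from the pro-$\mathcal{C}$ structure of $K$ (intuitively, $K$ becomes increasingly absorbed into commutators of $G$ modulo each $N_j$, controlling the growth of $K/[N_j, K]$). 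Once this is in hand, combining the three ingredients gives $\lim_j \frac{b_1(N_j)}{[G:N_j]} \le b_1^{(2)}(G/K)$, hence the required reverse inequality. The equality chain~\eqref{equality_approximation} under the additional assumption $G/G_i \in \mathcal{C}$ then follows by applying the same argument directly to the given chain $(G_i)$, together with the trivial inequality $b_1(G_i) \ge b_1(G_i/K)$ and the first inequality of Part~(1).
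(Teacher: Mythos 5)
Your Part~(1) is correct and follows the paper's route: the rightmost equality is the classical approximation theorem applied to the free $(G/K)$-$CW$-complex $K\backslash EG$ with finite $2$-skeleton, the leftmost inequality is L\"uck--Osin for the finitely generated group $G/K$, and your cell-attachment argument for the middle inequality is just an unpacking of the citation the paper uses (a $1$-connected map $K\backslash EG \to E(G/K)$ cannot decrease the first $L^2$-Betti number). Your opening move in Part~(2) -- observing that the limit depends only on $K$ by Part~(1), so one may pass to a chain with all quotients in $\mathcal{C}$ -- also matches the paper.

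However, Part~(2) has a genuine gap at exactly the point you flag as the ``main obstacle.'' You reduce everything to the asymptotic estimate that the image of $H_1(K;\IQ)_{N_j/K}$ in $H_1(N_j;\IQ)$ has dimension $o([G:N_j])$, and you offer only the expectation that this ``follows from the pro-$\mathcal{C}$ structure of $K$.'' That is the entire content of the theorem's second part, and no argument is given. Note also that $K$ is typically not finitely generated and $N_j/K$ is infinite, so bounding $\dim_{\IQ} H_1(K;\IQ)_{N_j/K}$ itself (rather than its image in $H_1(N_j;\IQ)$) is not even obviously meaningful. The paper closes this gap with a short, purely group-theoretic argument proving the much stronger \emph{exact} statement: for each $i$, writing $G_i'$ for the kernel of $G_i \to H_1(G_i)/\tors(H_1(G_i))$, one has $K \subseteq G_i'$. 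Indeed, $\mathcal{C}$ contains a nontrivial group, hence (being closed under subgroups) a cyclic group of some order $k$, hence (being closed under extensions) all groups $(\IZ/k^m\IZ)^b$; thus $G_i/G_i'G_i^{k^m} \cong (\IZ/k^m\IZ)^{b_1(G_i)} \in \mathcal{C}$, and since $G/G_i \in \mathcal{C}$ and $\mathcal{C}$ is extension-closed, $G/G_i'G_i^{k^m} \in \mathcal{C}$ for every $m$. As $K$ is the intersection of all normal subgroups with quotient in $\mathcal{C}$, this forces $K \subseteq \bigcap_m G_i'G_i^{k^m} = G_i'$, whence $b_1(G_i/K) = b_1(G_i)$ on the nose. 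The two ends of the inequality chain in Part~(1) then coincide and all intermediate terms are equal. Without this (or an actual proof of your asymptotic claim), your argument for Part~(2) is incomplete.
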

\begin{proof}~\eqref{thm:Lueck_notes:inequalities}
  Since $G$ is finitely presented, there is a $G$-$CW$-model for the classifying
  space $BG$ whose $2$-skeleton is finite. Let $EG \to BG$ be the universal
  covering. Then $EG$ is a free $G$-$CW$-complex with finite $2$-skeleton. Put
  \begin{eqnarray*}
   Q = G/K;
   \\
   Q_i = G_i/K.
 \end{eqnarray*}
Then $Q = Q_0 \supseteq Q_1 \supseteq \cdots$
is a descending chain of finite index normal subgroups of $Q$ with  
$\bigcap_{i = 0}^{\infty} Q_i =  \{1\}$ and we have for $i = 0,1,2,\ldots$
  \begin{eqnarray}
  [G:G_i]  & = & [Q:Q_i].
    \label{equality_of_degrees}
  \end{eqnarray}
The quotient $X= K\backslash EG$ is a free $Q$-$CW$-complex whose
$2$-skeleton is finite.    
Let $X_2$ be the $2$-skeleton of $X$. Since the first $L^2$-Betti number and 
the first Betti number depend only on the $2$-skeleton,
from~\cite[Theorem~0.1]{Lueck(1994c)} applied to the $G$-covering $X_2\to  X_2/G$ (we do not need $X_2$ to be simply connected) or directly from~\cite[Theorem~13.3 on page~454]{Lueck(2002)}, we obtain
 \begin{eqnarray}
 b_1^{(2)}(X;\caln(Q)) & = & \lim_{i \to \infty} \frac{b_1(Q_i\backslash X)}{[Q:Q_i]}.
\label{class_approx}
\end{eqnarray}
Let $f \colon X \to EQ$ be the classifying map. Since $EQ$ is simply connected,
this map is $1$-connected. This implies by~\cite[Theorem~6.54~(1a) on page~265]{Lueck(2002)}
\begin{eqnarray}
b_1^{(2)}(X;\caln(Q)) & \ge & b_1^{(2)}(EQ;\caln(Q)).
\label{inequality_of_L2-Betti_numbers}
\end{eqnarray}
The group $Q$ is finitely generated (but not necessarily finitely presented), so
by \cite[Theorem~1.1]{Lueck-Osin(2011)} we have
\begin{eqnarray}
\lim_{i \to \infty} \frac{b_1(Q_i)}{[Q:Q_i]} & \le & b_1^{(2)}(Q).
\label{Lueck-Osin}
\end{eqnarray}
Notice that $b_1^{(2)}(Q)=b_1^{(2)}(EQ;\caln(Q))$ by definition and we obviously have
$Q_i \backslash X = G_i \backslash EG = BG_i$ and hence $b_1(Q_i\backslash X) = b_1(G_i)$. Combining
\eqref{equality_of_degrees},~\eqref{class_approx},~\eqref{inequality_of_L2-Betti_numbers}, and~\eqref{Lueck-Osin}, we get
$$\lim_{i \to \infty} \frac{b_1(Q_i)}{[Q:Q_i]} \le
b_1^{(2)}(Q) \le
b_1^{(2)}(X;\caln(Q)) = 
\lim_{i \to \infty} \frac{b_1(Q_i\backslash X)}{[Q:Q_i]} = 
\lim_{i \to \infty} \frac{b_1(G_i)}{[G:G_i]}.$$
This finishes the proof of assertion~\eqref{thm:Lueck_notes:inequalities}.
\\[2mm]~\eqref{thm:Lueck_notes:equalities} First observe that
since $b_1^{(2)}\bigl(K\backslash EG; \caln(G/K)\bigr)=\lim\limits_{i\to\infty} \frac{b_1(G_i)}{[G:G_i]}$
by $\eqref{thm:Lueck_notes:inequalities}$, the limit $\lim\limits_{i\to\infty} \frac{b_1(G_i)}{[G:G_i]}$
is the same for all finite index normal chains $(G_i)$ with $\cap_{i\in\IN}G_i=K$.%
\footnote{We are grateful to Andrei Jaikin-Zapirain for this observation.}
By definition of $K$, there exists at least one such chain with $G/G_i\in\mathcal C$ for all $i$
(e.g., we can let $(G_i)$ be a base of neighborhoods of $1$ for the pro-$\mathcal C$ topology on $G$), 
so it suffices to prove \eqref{equality_approximation}. Thus, from now on we will assume that $G/G_i\in\mathcal C$ 
for $i\in\IN$.
\\[2mm]
For a finitely generated group $H$
we denote by $H'$ the kernel of the composite of canonical projections $H \to
H_1(H) \to H_1(H)/\tors(H_1(H))$, so that $H/H'$ is a free abelian group
of rank $b_1(H)$.

As in the proof of (1), we put $Q_i=G_i/K$ for $i\in\IN$. It is sufficient
to prove that that $K\subseteq G_i'$ for $i\in \IN$. Indeed, this would imply that
$Q_i/Q_i'\cong G_i/G_i'$, whence $b_1(Q_i)=b_1(G_i)$ and therefore
$\lim_{i\to\infty} \frac{b_1(Q_i)}{[G:G_i]}=\lim_{i\to\infty} \frac{b_1(G_i)}{[G:G_i]}$,
which proves (2) in view of \eqref{thm:Lueck_notes:inequalities}.
\\[2mm]
Fix $i\in\IN$ and let $H=G_i$. Since $\mathcal{C}$ contains at least one non-trivial finite group 
and is closed under subgroups, it contains a finite cyclic group, say of order $k$. 
Since $\mathcal{C}$ is closed under extensions, it contains
$\left(\IZ/k^m\IZ\right)^b$ for all $m,b\in \IN$. Setting $b=b_1(H)$, we get
that $H/H' H^{k^m}\in\mathcal C$ for all $m\in\IN$, and since $\mathcal C$
is closed under extensions, we obtain $G/H' H^{k^m}\in\mathcal C$.
By definition, $K$ is the intersection of all normal subgroups $L$ of $G$
with $G/L\in\mathcal C$.  Therefore, $K\subseteq \bigcap\limits_{m\in\IN} H' H^{k^m}=H'$.
\end{proof}

\begin{proof}[Second proof of Theorem~\ref{cor:The_first_L2-Betti_number_and_F_p-approximation}]~%
\\
\eqref{cor:The_first_L2-Betti_number_and_F_p-approximation:monotone}
This is a direct consequence of the following well-known fact: if $H$ is a normal subgroup of
$p$-power index in $G$, then $b_1(H;\IF_p)-1\leq [G:H] (b_1(G;\IF_p)-1)$
(see, e.g., \cite[Proposition~3.7]{Lackenby(2007covering)}).
\\[2mm]~\eqref{cor:The_first_L2-Betti_number_and_F_p-approximation:inequality}
Choose an epimorphism $\pi\colon F\to G$, where $F$ is a finitely generated free group.
Fix $n\in\IN$, let $F_n=\pi^{-1}(G_n)$ and $H=[F_n,F_n]F_n^p$. Then $H$ is a finite
index subgroup of $F$, so we can choose a presentation $(X,R)$ of $G$ associated with $\pi$
such that $R=R_1\sqcup R_2$, where $R_1$ is finite and $R_2\subseteq H$.

Consider the finitely presented group $\widetilde G=\langle X\mid R_1\rangle$.
We have natural epimorphisms $\phi\colon  \widetilde G\to G$ and $\psi\colon F\to \widetilde G$,
with $\phi\psi=\pi$. If we let $\widetilde G_i=\phi^{-1}(G_i)$ and $\widetilde K=\bigcap_{i=1}^{\infty} \widetilde G_i$,
then $\widetilde G/\widetilde K\cong G$. Thus, applying
Theorem~\ref{thm:Lueck_notes}~\eqref{thm:Lueck_notes:inequalities}
to the group $\widetilde G$ and its subgroups $(\widetilde G_i)$, we get
$b_1^{(2)}(G)\leq \lim_{i \to \infty} \frac{b_1(\widetilde G_i)}{[\widetilde G:\widetilde G_i]}$.
Clearly, $\lim_{i \to \infty} \frac{b_1(\widetilde G_i)}{[\widetilde G:\widetilde G_i]}\leq
\lim_{i \to \infty} \frac{b_1(\widetilde G_i;\IF_p)}{[\widetilde G:\widetilde G_i]}$,
and by assertion~\eqref{cor:The_first_L2-Betti_number_and_F_p-approximation:monotone},
\[
\lim_{i \to \infty} \frac{b_1(\widetilde G_i;\IF_p)}{[\widetilde G:\widetilde G_i]}\leq
\frac{b_1(\widetilde G_n;\IF_p)}{[\widetilde G:\widetilde G_n]}=
\frac{b_1(\widetilde G_n;\IF_p)}{ [G: G_n]}.
\]
Since $G\cong \widetilde G/\langle \langle \psi(R_2)\rangle\rangle$ and by construction
$\psi(R_2)\subseteq \psi(H)=[\widetilde G_n,\widetilde G_n]{\widetilde G_n}^p$,
we have $\ker\phi \subseteq [\tilde G_n,\tilde G_n]\tilde G_n^p$, and therefore
$b_1(\widetilde G_n;\IF_p)=b_1(\phi(\widetilde G_n);\IF_p)=b_1(G_n;\IF_p)$.

Combining these inequalities, we get $b_1^{(2)}(G)\leq \frac{b_1(G_n;\IF_p)}{[G:G_n]}$.
Since $n$ is arbitrary, the proof is complete.
\end{proof}

 \typeout{------------   Section 4: A counterexample with non-trivial intersection ----------------------}

\section{A counterexample with non-trivial intersection}
\label{sec:A_counterexample_with_non-trivial_intersection}
In this section we show that the answer to
Questions~\ref{que:Q-approximation_versus-F_p_approximation}
and~\ref{que:RG_and_limits_of_Betti_numbers} could be negative
for a finitely presented group $G$ and a strictly descending chain $(G_i)_{i\in\IN}$
of normal subgroups of $p$-power index if the intersection
$\cap_{i\in\IN} G_i$ is non-trivial (see inequalities
\eqref{inequality_nontrivialintersection} below).

We start with a finitely generated group $H$ (which will be specified later) and let
$G=H\ast \mathbb Z$. Choose a strictly increasing sequence of positive integers $n_1,n_2,\ldots$ with $n_i\mid n_{i+1}$
for each $i$, and let $G_i \subseteq G$ be the preimage of $n_i \cdot \IZ$
under the natural projection $\pr \colon G= \IZ \ast H \to \IZ$. Then $(G_i)_{i\in\IN}$
is a descending chain of normal subgroups of $G$ with
$\bigcap_{i \ge 1} G_i = \ker(\pr)$.
Let $BG_i \to BG$ be the covering of $BG$ associated
to $G_i \subseteq G$. Then $BG_i$ is homeomorphic to $S^1 \vee \left(\bigvee_{j= 1}^{n_i} BH\right)$.
We have
\[G_i \cong \pi_1(BG_i) \cong \pi_1\left(S^1 \vee \left(\bigvee_{j = 1}^{n_i} BH\right)\right) \cong \IZ \ast (\ast_{j=1}^{n_i} H).\]
Since for any groups $A$ and $B$ we have $A\ast B/[A\ast B,A\ast B]\cong A/[A,A]\oplus B/[B,B]$ 
and $d(A \ast B) = d(A ) + d(B)$ by Grushko-Neumann theorem (see~\cite[Corollary~2 in Section~8.5 on page~227]{Cohen(1989)}, we conclude
\begin{eqnarray*}
H_1(G_i;K) & = & K \oplus \bigoplus_{j=1}^{n_i} H_1(H;K);
\\
H_1(G_i) & = & \IZ\oplus \bigoplus_{j=1}^{n_i} H_1(H);
\\
d(G_i) & = & 1 + n_i \cdot d(H);
\\
\lim_{i \to \infty} \frac{b_1(G_i;K)}{n_i} & = & b_1(H;K);
\\
\lim_{i \to \infty} \frac{d(H_1(G_i))}{n_i} & = & d(H_1(H));
\\
\RG(G;(G_i)_{i \ge 1}) & = & d(H).
\end{eqnarray*}

Now let $p\neq q$ be distinct primes and $H=\mathbb Z/p \mathbb Z\ast \mathbb Z/q \mathbb Z\ast \mathbb Z/q \mathbb Z$.
Clearly we have 
\begin{equation}
\label{eq:0123}
b_1(H) = 0,\quad b_1(H;\IF_p) = 1,\quad d(H_1(H))= 2,\quad  d(H)=3. 
\end{equation}
Hence we obtain
\begin{equation}
\label{inequality_nontrivialintersection}
\lim_{i \to \infty} \frac{b_1(G_i )}{[G:G_i]} < \lim_{i \to \infty}
\frac{b_1(G_i;\IF_p)}{[G:G_i]}
< \lim_{i \to \infty} \frac{d(H_1(G_i))}{[G:G_i]} < \RG(G;(G_i)_{i \ge 1}).
\end{equation}

Using a different $H$ 
we can produce an example of this type where $G$ has a very strong finiteness property, namely, 
$G$ has finite $2$-dimensional $BG$. The construction below is due to Denis Osin
and is simpler and more explicit than the original version of our example.

Again, let $p\ne q$ be two primes. Consider the group
$$
H=\langle x,y,z \mid x^p=u, y^q=v, z^q=w\rangle,
$$
where $u,v,w$ are words from the commutator subgroup of the free group $F$ with basis $x,
y,z$ such that the presentation of $H$ satisfies the $C^\prime (1/6)$ small
cancellation condition. Such words are easy to find explicitly.  
Note that $G=H\ast \mathbb Z$ is a torsion-free $C^\prime (1/6)$ group, hence it has
a finite $2$-dimensional $BG$.

Since $u,v,w\in [F,F]$, we have $b_1(H) = 0$, $b_1(H;\IF_p) = 1$, $d(H_1(H))
= 2$. Further it follows from \cite[Corollary 2]{Erschler(2004randomwalks)} that
the exponential growth rate of $H$ can be made arbitrarily close to $2\cdot
3-1=5$, the exponential growth rate of the free group of rank $3$, by taking
sufficiently long words $u,v,w$. As the exponential growth rate of an
$m$-generated group is bounded from above by $2m-1$, we obtain $d(H) = 3$
whenever $u,v,w$ are sufficiently long. (For details about the exponential
growth rate we refer to \cite{Erschler(2004randomwalks)}.) 

By using a more elaborated construction from~\cite{Wise(1998)}, one can make
such a group $G$ the fundamental group of a compact $2$-dimensional $CAT(-1)$
$CW$-complex.
Other examples of this type can be found in~\cite{Bergeron-Linnell-Lueck-Sauer(2012)}
and~\cite{Lueck(2013l2approxfib)}.

 \typeout{------------   Section 5: $\IQ$-approximation without limit ----------------------}

\section{$\IQ$-approximation without limit}
\label{sec:Q-approximation_without_limit}

In this section we prove the following theorem, which trivially implies
Theorem~\ref{Qappr_nolimit}.

\begin{theorem}
\label{thm:Qapproximation}
Let $d\geq 2$ be a positive integer, let
$p$ be a prime and let
$\varepsilon$ be a real number satisfying $0 < \varepsilon < 1$. Then there exist a  group $G$
with $d$ generators and a descending chain $G=G_{0}\supseteq G_1\supseteq G_2\ldots$ of
normal subgroups of $G$ of $p$-power index with $\bigcap_{i=1}^{\infty} G_i=\{1\}$
with the following properties:
\begin{itemize}
\item[(i)] $\liminf_{i\to\infty}\frac{b_1(G_{2i})}{[G:G_{2i}]}\geq d-1-\varepsilon$;
\item[(ii)] $\lim_{i\to\infty}\frac{b_1(G_{2i-1})}{[G:G_{2i-1}]}=0$.
\end{itemize}
Moreover, if $q$ is a prime different from $p$, we can replace (ii) by a stronger
condition (ii)':
\begin{itemize}
\item[(ii')] $\lim_{i\to\infty}\frac{b_1(G_{2i-1};\IF_q)}{[G:G_{2i-1}]}=0$.
\end{itemize}
\end{theorem}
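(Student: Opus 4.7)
The plan is to construct $G$ as a quotient $F_d/N$ of the free group of rank $d$, with the chain $(G_i)$ inherited from a fixed residually-$p$ descending chain of characteristic subgroups of $F_d$. Let $F_d = L_0 \supset L_1 \supset L_2 \supset \cdots$ be such a chain (for concreteness, the mod-$p$ lower central series), so that each $[F_d:L_k]$ is a $p$-power, $\bigcap_k L_k = \{1\}$, and each $L_k$ is free of rank $r_k = [F_d:L_k](d-1)+1$ by Schreier's formula. We shall inductively choose a subsequence $n_0 = 0 < n_1 < n_2 < \cdots$ and a sequence of finite sets $R_1, R_3, R_5, \ldots \subset F_d$ of relators, and set $N = \langle\!\langle \bigcup_{i \ge 1} R_{2i-1} \rangle\!\rangle^{F_d}$, $G = F_d/N$, and $G_i = L_{n_i} N/N$.

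The key technical fact is a \emph{dilution estimate}: for any \emph{fixed} finite relator set $R \subset F_d$, the image of the normal closure $\langle\!\langle R \rangle\!\rangle^{F_d}$ in $L_k^{\mathrm{ab}} \otimes \IQ$ has $\IQ$-dimension $o([F_d:L_k])$ as $k \to \infty$. The reason is that each $r \in R$ has an infinite cyclic centralizer $C_{F_d}(r)$ in the free group $F_d$; hence the $F_d$-orbit of the image of $r$ in $L_k^{\mathrm{ab}}$ has cardinality at most $[F_d:L_k\cdot C_{F_d}(r)] = [F_d:L_k]/[C_{F_d}(r):C_{F_d}(r) \cap L_k]$, and the denominator tends to infinity with $k$ because $\bigcap L_k = \{1\}$. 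Products of conjugates that land in $L_k$ without their individual factors being in $L_k$ are handled by a similar but slightly more elaborate count.

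The inductive construction alternates. At \emph{odd stage} $2i-1$, given the data from the previous stage, choose $n_{2i-1} > n_{2i-2}$ and a finite set $R_{2i-1} \subseteq [L_{n_{2i-2}}, L_{n_{2i-2}}] \cap L_{n_{2i-1}}$ so that the cumulative image in $L_{n_{2i-1}}^{\mathrm{ab}} \otimes \IQ$ has codimension at most $[F_d:L_{n_{2i-1}}]/i$, forcing $b_1(G_{2i-1})/[G:G_{2i-1}] \leq 1/i$. The inclusion $R_{2i-1} \subseteq [L_{n_{2i-2}}, L_{n_{2i-2}}]$ is crucial: these new relators have trivial image in $L_{n_{2i-2}}^{\mathrm{ab}}$, so they do not retroactively spoil $b_1(G_{2i-2})$. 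At \emph{even stage} $2i$, the relator set is now a fixed finite set, so by the dilution estimate we may take $n_{2i}$ large enough to make the image in $L_{n_{2i}}^{\mathrm{ab}} \otimes \IQ$ of dimension at most $\varepsilon [F_d:L_{n_{2i}}]$, yielding $b_1(G_{2i})/[G:G_{2i}] \geq d-1-\varepsilon$. A standard diagonal argument at each stage (separating a prescribed enumeration of non-trivial elements of $F_d$ from $L_{n_i}N$) ensures $\bigcap G_i = \{1\}$. For the strengthening to (ii'), select each relator as a product of $p$-th powers of commutators so that the torsion in $G_{2i-1}^{\mathrm{ab}}$ is $p$-primary and hence vanishes upon tensoring with $\IF_q$ for $q \neq p$.

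The main obstacle is the \emph{concentration} step at each odd stage: explicitly producing relators in $[L_{n_{2i-2}}, L_{n_{2i-2}}] \cap L_{n_{2i-1}}$ whose $F_d$-orbits in $L_{n_{2i-1}}^{\mathrm{ab}}$ jointly span a subspace of codimension $\leq [F_d:L_{n_{2i-1}}]/i$. One needs relators whose root elements have controlled (not too deep) depth in the chain so that each orbit has size a positive fraction of $[F_d:L_{n_{2i-1}}]$, together with an independence check among the resulting orbit vectors; this can be arranged using the graded structure of the mod-$p$ lower central series quotients $L_k/L_{k+1}$ as a restricted Lie algebra over $\IF_p$.
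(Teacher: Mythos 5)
There is a genuine gap, and it sits at the heart of your construction: the ``dilution estimate'' is false as stated. Take $d=2$ and $R=\{[x,y]\}$ (a relator lying in $[L_0,L_0]$, so admissible at your first odd stage). Then $F_2/\langle\!\langle R\rangle\!\rangle\cong\IZ^2$, so $G_k=L_k/(L_k\cap\langle\!\langle R\rangle\!\rangle)$ has $b_1(G_k)=2$ while $b_1(L_k)=[F_2:L_k]+1$; hence the image of $\langle\!\langle R\rangle\!\rangle\cap L_k$ in $L_k^{\mathrm{ab}}\otimes\IQ$ has dimension $[F_2:L_k]-1$, not $o([F_2:L_k])$. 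The structural reason is that your centralizer count only controls the images of the individual conjugates $f^{-1}rf$, and these lie in $L_k$ only as long as $r\in L_k$ --- which fails for all large $k$ since $\bigcap_k L_k=\{1\}$. For large $k$ the subgroup $\langle\!\langle r\rangle\!\rangle\cap L_k$ is generated by long products of conjugates, and its image in $L_k^{\mathrm{ab}}$ is large; equivalently, the correct ambient subgroup is $\widetilde L_k=L_kN\supseteq N\ni r$ (since $G_k=\widetilde L_k/N$), and $[C_{F_d}(r):C_{F_d}(r)\cap\widetilde L_k]$ does \emph{not} tend to infinity: it is bounded forever by the exponent with which the maximal root of $r$ occurs in $r$, because $r$ itself lies in every $\widetilde L_k$. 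By Lemma~\ref{lem:Puchta}, a relator with $e_p(r,F)=0$ costs a full $[G:H]$ conjugates with no dilution at all. So the ``slightly more elaborate count'' you defer to cannot exist in the generality you claim; this is exactly why the paper works with $p$-deficiency and $p$-regularity: the relators added are high $p$-th powers $f^{p^n}$, each contributing only $[G:H]\cdot p^{-(n-e_p(f^{p^n},F_H))}$ relations, $p$-regularity guarantees the root retains its full order in the relevant finite quotients so the factor $p^{-n}$ is realized, and the total cost is kept below $\varepsilon$ by making $\sum_j p^{-n_j}$ small --- not by letting $k\to\infty$.

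A second problem is the step you yourself flag as ``the main obstacle'': producing relators in $[L_{n_{2i-2}},L_{n_{2i-2}}]\cap L_{n_{2i-1}}$ whose orbits span a subspace of codimension at most $[F_d:L_{n_{2i-1}}]/i$. This is left unresolved, and it is genuinely hard if the chain is fixed in advance. The paper sidesteps it by building the subgroup to fit the relators rather than conversely: it takes $F_{2N+1}=[F_{2N},F_{2N}]F_{2N}^{p^e}$ with $e$ so large that $d(F_{2N})$ is negligible compared with $[G(N):G(N)_{2N+1}]$, and then Corollary~\ref{cor:pregular} supplies high $p$-th powers of free generators of $F_{2N+1}$ lying in $[F_{2N},F_{2N}]$ that kill all but $d(F_{2N})$ of the first Betti number of that layer. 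Finally, your treatments of (ii') (arranging $p$-primary torsion) and of $\bigcap_i G_i=\{1\}$ (a ``diagonal argument'') are too vague: the infinitely presented quotient $F_d/N$ need not be residually $p$, which is why the paper replaces $G(\infty)$ by its image in its pro-$p$ completion and verifies that indices and first Betti numbers of the relevant subgroups are unchanged, and why Corollary~\ref{cor:pregular} is formulated directly for $\IF_q$-Betti numbers.
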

Note that the last assertion of Theorem~\ref{thm:Qapproximation} shows that the answer
to Question~\ref{que:limit_independent_of_chain} can be negative when ${\rm char}(K)=q>0$ if we do not require that $(G_i)$ is a $q$-chain.


\subsection{Preliminaries}
\label{sec:Preliminaries}
Throughout this section $p$ will be a fixed prime number.
Given a finitely generated group $G$, we will denote
by $G_{\hat p}$ the pro-$p$ completion of $G$ and by
$G_{(p)}$ the image of $G$ in $G_{\hat p}$ (which is isomorphic
to the quotient of $G$ by the intersection of normal
subgroups of $p$-power index). Given a set $X$, by $F(X)$
we denote the free group on $X$.

Let $F$ be a free group and $w\in F$ a non-identity element.
Given $n\in \IN$, denote by $\sqrt[n]{w}$ the unique element
of $F$ whose $n^{\rm th}$ power is equal to $w$ (if such element
exists). Define $e_p(w,F)$ to be the largest
natural number  $e$ with the property that $\sqrt[p^e]{w}$ exists in $F$.

\begin{lemma}
\label{lem:Puchta}
Let $(X,R)$ be a presentation of a group $G$
with $X$ finite, $F=F(X)$ and $\pi\colon F\to G$ the natural projection.
Let $H$ be a normal subgroup of $p$-power index in $G$,
and let $F_H=\pi^{-1}(H)$. Then $H=F_H/\langle \langle R_H \rangle\rangle$
where $R_H$ contains $\frac{[G:H]}{p^{e_p(r,F)-e_p(r,F_H)}}$
$F$-conjugates of $r$ for each $r\in R$ and
no other elements.
\end{lemma}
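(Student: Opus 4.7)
The plan is to take $R_H$ to be a set of $F_H$-orbit representatives on each $F$-conjugacy class $r^F := \{frf^{-1} : f \in F\}$ for $r \in R$, and to count these orbits via the centralizer of $r$ in $F$. First, since $H \triangleleft G$, the subgroup $F_H = \pi^{-1}(H)$ is normal in $F$, and clearly $R \subseteq \ker\pi \subseteq F_H$. The kernel of the restriction $\pi|_{F_H}\colon F_H \twoheadrightarrow H$ is $N := \langle\langle R\rangle\rangle_F$, which is already normal in $F$; hence $N = \langle\langle S\rangle\rangle_{F_H}$ for any set $S \subseteq N$ whose $F_H$-conjugates generate $N$ as a subgroup.

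For each $r \in R$ I would let $R_r \subseteq r^F$ be a set of representatives of the orbits of $F_H$ on $r^F$ under conjugation. Then $\langle\langle R_r\rangle\rangle_{F_H} = \langle\langle r\rangle\rangle_F$: the $F_H$-conjugates of $R_r$ exhaust $r^F$, which generates $\langle\langle r\rangle\rangle_F$ as a subgroup, while the reverse inclusion follows because $\langle\langle r\rangle\rangle_F$ is $F$-normal (and hence $F_H$-normal) and contains $R_r$. Taking $R_H = \bigsqcup_{r \in R} R_r$ thus gives $H \cong F_H / \langle\langle R_H\rangle\rangle_{F_H}$, and each element of $R_H$ is by construction an $F$-conjugate of some $r \in R$.

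Next, I would count $|R_r|$, i.e., the number of $F_H$-orbits on $r^F$. Identify $r^F$ with $F/C_F(r)$ as an $F$-set via $fC_F(r) \mapsto frf^{-1}$. Because $F_H$ is normal in $F$, the stabilizer in $C_F(r)$ of any coset $F_H f$ (under the right action) is $\{c \in C_F(r) : fcf^{-1} \in F_H\} = C_F(r) \cap F_H$, independent of the coset. Hence every $F_H$-orbit has size $[C_F(r) : C_F(r) \cap F_H]$, and the number of orbits equals $[F : F_H] / [C_F(r) : C_F(r) \cap F_H] = [G:H] / [C_F(r) : C_F(r) \cap F_H]$.

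The core calculation, which I expect to be the main obstacle, is to identify $[C_F(r) : C_F(r) \cap F_H]$ with $p^{e_p(r,F) - e_p(r, F_H)}$. Centralizers in free groups are cyclic, so write $C_F(r) = \langle r^* \rangle$ where $r^*$ is the unique maximal root of $r$, and $r = (r^*)^n$. Since $F/F_H$ is a finite $p$-group, the image of $r^*$ has $p$-power order, say $p^k$, giving $[C_F(r) : C_F(r) \cap F_H] = p^k$. Decomposing $n = p^a m$ with $p \nmid m$, one checks using uniqueness of $p$-th roots in $F$ that $u := (r^*)^m$ is not a $p$-th power in $F$ (otherwise, since $\gcd(p,m)=1$, one deduces that $r^*$ itself is a $p$-th power, contradicting maximality), so $e_p(r,F) = a$. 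The parallel analysis in $F_H$: the maximal $b$ with $r = v^{p^b}$ for some $v \in F_H$ forces $v = (r^*)^{m p^{a-b}}$ by uniqueness of roots, and $(r^*)^s \in F_H$ iff $p^k \mid s$; combined with $\gcd(p,m) = 1$ this yields $b = a - k$, so $k = e_p(r,F) - e_p(r, F_H)$. This gives $|R_r| = [G:H] / p^{e_p(r,F) - e_p(r,F_H)}$ as required. The delicacy lies in juggling the elements $r$, $r^*$, $u$ and $v$ while keeping track of divisibility conditions coming from the $p$-group quotient $F/F_H$.
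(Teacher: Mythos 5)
Your proof is correct and is in substance the same as the paper's: the paper picks a transversal for $\langle w(r)\rangle F_H$ in $F$, where $w(r)=\sqrt[p^{e_p(r,F)}]{r}$, invokes a lemma of Olshanskii--Osin for the normal-generation step, and computes $[\langle w(r)\rangle:\langle w(r)\rangle\cap F_H]=p^{e_p(r,F)-e_p(r,F_H)}$ --- which agrees with your $[C_F(r):C_F(r)\cap F_H]$ because $w(r)=(r^*)^m$ with $p\nmid m$ generates the same subgroup of the $p$-group $F/F_H$ as the maximal root $r^*$, so $\langle w(r)\rangle F_H=C_F(r)F_H$; your version is merely self-contained where the paper cites. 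One small wording slip: the $F_H$-orbits on $r^F$ are infinite, so they do not have size $[C_F(r):C_F(r)\cap F_H]$ --- that is the size of each orbit of the right $C_F(r)$-action on $F_H\backslash F$, and since those orbits biject with the double cosets and hence with the $F_H$-orbits on $r^F$, your count of the number of orbits is nevertheless correct.
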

\begin{proof}
Very similar results are proved in both~\cite{Osin(2011_rankgradient)}
and~\cite{Schlage-Puchta(2012)}, but for completeness we give a proof.
For each $r\in R$, write $r=w(r)^{p^{e_p(r,F)}}$, and choose
a right transversal $T=T(r)$ for $\la w(r)\ra F_H$ in $F$. Then,
since $w(r)$ commutes with $r$,
by  \cite[Lemma~2.3]{Olshanskii-Osin(2008)} we have
$\la r\ra^{F}=\la \{t^{-1}rt: t\in T\}\ra^{F_H}$. Hence $\la \{t^{-1}rt: r\in R, t\in T(R)\}\ra^{F_H}=
\la R\ra^{F}=\ker\pi=\ker(F_H\to H)$, and so it suffices to prove
that $|T(r)|=\frac{[G:H]}{p^{e_p(r,F)-e_p(r,F_H)}}$.

We have
\begin{multline*}
|T(r)|=[F:\la w(r)\ra F_H]=\frac{[F:F_H]}{[\la w(r)\ra F_H:F_H]}=
\frac{[G:H]}{[\la w(r)\ra:\la w(r)\ra \cap F_H]}
\end{multline*}
Finally note that $[\la w(r)\ra:\la w(r)\ra \cap F_H]$ is equal to
$p^k$ for some $k$ (as it divides $[F:F_H]=p^n$), so
$\la w(r)\ra \cap F_H=\la w(r)^{p^k}\ra$.
But then from definition of $e_p(r,F_H)$ we easily
conclude that $((w(r)^{p^k})^{p^{e_p(r,F_H)}}=r=w(r)^{p^{e_p(r,F)}}$.
Hence $k=e_p(r,F)-e_p(r,F_H)$ and $|T(r)|=\frac{[G:H]}{p^{e_p(r,F)-e_p(r,F_H)}}$,
as desired.
\end{proof}

The following definition was introduced by Schlage-Puchta in~\cite{Schlage-Puchta(2012)}.

\begin{definition} \label{def_p-deficiency}
Given a group presentation by generators and relators $(X,R)$,
where $X$ is finite, its \emph{$p$-deficiency} ${\rm def}_p(X,R)\in \IR\cup\{-\infty\}$
is defined by
\[
{\rm def}_p(X,R) = |X|-1- \sum_{r \in R} \;\frac{1}{p^{e_p(r,F(X))}}.
\]
The $p$-deficiency of a finitely generated group $G$ is the supremum
of the set $\{{\rm def}_p(X,R)\}$ where $(X,R)$ ranges over all presentations
of $G$.
\end{definition}

The main motivation for introducing $p$-deficiency in~\cite{Schlage-Puchta(2012)}
was to construct a finitely generated $p$-torsion group with positive rank gradient.
Indeed, it is clear that there exist $p$-torsion groups with positive $p$-deficiency,
and in \cite{Schlage-Puchta(2012)} it is proved that a group with positive $p$-deficiency
has positive rank gradient (in fact, positive $p$-gradient). This is one of the results
indicating that groups of positive $p$-deficiency behave similarly to groups of deficiency
greater than $1$ (all of which trivially have positive $p$-deficiency for any $p$).

Lemma~\ref{lem:pregular} below shows that a finitely presented group $G$ of positive
$p$-deficiency actually contains a normal subgroup of $p$-power index with deficiency greater than $1$,
provided that the presentation of $G$ yielding positive $p$-deficiency is finite and
satisfies certain technical condition.

\begin{definition} A presentation $(X,R)$ of a group $G$
will be called \emph{$p$-regular} if for any $r\in R$ such that
$\sqrt[p]{r}$ exists in $F(X)$, the image of $\sqrt[p]{r}$ in
$G_{(p)}$ is non-trivial. This is equivalent to saying that
if we write each $r\in R$ as $r=v^{p^e}$, where $v$ is not
a $p^{\rm th}$ power in $F(X)$, then the image of $v$
in $G_{(p)}$ has order $p^e$.
\end{definition}

\begin{lemma}
\label{lem:pregular}
Let $(X,R)$ be a finite $p$-regular presentation
of a group $G$. Then there exists a normal subgroup of $p$-power
index $H$ of $G$ with $\frac{{\rm def}(H)-1}{[G:H]}\geq {\rm def}_p(X,R)$.
\end{lemma}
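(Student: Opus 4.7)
The plan is to combine Lemma~\ref{lem:Puchta} with the Schreier index formula to obtain a presentation of $H$ whose naive deficiency bound is exactly the $p$-deficiency of $(X,R)$ times $[G:H]$, plus a correction that vanishes precisely when $e_p(r,F_H)=0$ for every $r\in R$. The role of $p$-regularity will be to force the existence of an $H$ realising this vanishing.

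First, fix any normal subgroup $H$ of $p$-power index in $G$ and set $F=F(X)$, $F_H=\pi^{-1}(H)$. Since $F_H$ is a subgroup of index $[G:H]$ in the free group $F$ of rank $|X|$, the Schreier index formula gives $d(F_H)-1=[G:H]\cdot(|X|-1)$. Combining this with the presentation of $H$ provided by Lemma~\ref{lem:Puchta}, which uses $\sum_{r\in R}\frac{[G:H]}{p^{e_p(r,F)-e_p(r,F_H)}}$ relators, one obtains
\[
\frac{\mathrm{def}(H)-1}{[G:H]} \;\geq\; (|X|-1)-\sum_{r\in R}\frac{1}{p^{e_p(r,F)-e_p(r,F_H)}}.
\]
The right-hand side equals $\mathrm{def}_p(X,R)$ exactly when $e_p(r,F_H)=0$ for every $r\in R$, so the problem reduces to exhibiting an $H$ with this property.

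For a fixed relator $r\in R$, write $r=v^{p^{e}}$ with $e=e_p(r,F)$ and $v$ not a $p$-th power in $F$. Reading off what $e_p(r,F_H)=0$ means in terms of $v$: the condition is precisely that none of $v,v^p,\dots,v^{p^{e-1}}$ lies in $F_H$, equivalently that $\pi(v)$ has order at least $p^e$ in the finite $p$-group $G/H$. By $p$-regularity, the image $\bar v$ of $v$ in $G_{(p)}$ has order exactly $p^{e}$ (the inequality $\leq$ holds since $\pi(v)^{p^e}=\pi(r)=1$ in $G$, and the reverse inequality is what $p$-regularity guarantees). Because $G_{(p)}$ is the inverse limit of the finite quotients of $G$ of $p$-power index, the order of $\bar v$ in $G_{(p)}$ is the supremum of its orders in these finite quotients, so there exists a normal subgroup $N_r\trianglelefteq G$ of $p$-power index in which the image of $v$ already has order $p^e$.

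Finally, since $R$ is finite, one can take $H=\bigcap_{r\in R}N_r$; this is still a normal subgroup of $p$-power index, and for each $r\in R$ the image of the corresponding $v$ in $G/H$ has order at least its order in $G/N_r$, namely $p^{e_p(r,F)}$. Consequently $e_p(r,F_H)=0$ for every $r\in R$, and the initial displayed inequality yields $\frac{\mathrm{def}(H)-1}{[G:H]}\geq \mathrm{def}_p(X,R)$, as required. The main obstacle in this plan is the translation between the value of $e_p(r,F_H)$ and the order of $\pi(v)$ in $G/H$; once this correspondence is made transparent, $p$-regularity feeds directly into the construction of $H$.
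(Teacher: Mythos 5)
Your proposal is correct and follows essentially the same route as the paper: reduce via Lemma~\ref{lem:Puchta} and the Schreier formula to finding $H$ with $e_p(r,F_H)=0$ for all $r\in R$, then use $p$-regularity together with the residual $p$-finiteness of $G_{(p)}$ to produce such an $H$. The only cosmetic differences are that you detect $e_p(r,F_H)=0$ via the order of $\pi(v)$ in $G/H$ rather than via the non-membership of $\sqrt[p]{r}$ in $F_H$, and that you choose a subgroup $N_r$ for each relator and intersect, where the paper picks a single $p$-power-index subgroup of $G_{(p)}$ avoiding all the finitely many roots at once.
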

\begin{proof} Let $F=F(X)$. Let $r_1,\ldots, r_m$ be the elements
of $R$ and let $s_i=\sqrt[p]{r_i}$, whenever it is defined in $F(X)$.

Let $\pi\colon F\to  G_{(p)}$ be the natural projection.
Since the presentation $(X,R)$ is $p$-regular,
$\pi(s_i)$ is non-trivial whenever $s_i$ is defined, and since the group
$G_{(p)}$ is residually-$p$,  there exists a normal subgroup $H'$ of
$G_{(p)}$ of $p$-power index  which contains none of the elements $\pi(s_i)$.

Let $F_H=\pi^{-1}(H')$. By construction, $s_i\not\in F_H$, but $r_i\in F_H$,
and therefore $e_p(r_i,F_H)=0$ for each $i$. Let $H$ be the image of $F_H$ in $G$.
Then  by Lemma~\ref{lem:Puchta}, $H$ has a presentation with
$d(F_H)$ generators and $\sum_{i=1}^m\frac{[G:H]}{p^{e_p(r_i,F)}}$ relators.
Since $d(F_H)-1=(|X|-1)[F:F_H]=(|X|-1)[G:H]$ by the Schreier formula,
we get
\[
{\rm def} (H)-1\geq [G:H] \cdot \biggl(|X|-1-\sum_{i=1}^m p^{-e_p(r_i,F)} \biggr) = [G:H]\cdot {\rm def}_p (X,R).
\]
\end{proof}

\begin{lemma}
\label{lem:pregular2} Let $(X,R)$ be a finite $p$-regular presentation,
and let $G=\langle X|R \rangle$. Let $f\in F(X)$ be such that the
image of $f$ in the pro-$p$ completion of $G$ has infinite order.
Then there exists $N\in\IN$ such that for all $n\geq N$
the presentation $(X,R\cup\{f^{p^n}\})$ is $p$-regular.
\end{lemma}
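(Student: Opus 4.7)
The plan is to split $p$-regularity of the enlarged presentation $(X, R \cup \{f^{p^n}\})$ into two conditions and treat them separately. Let $\widetilde G_n = \langle X \mid R \cup \{f^{p^n}\}\rangle$, let $\Gamma = G_{\hat p}$, and let $K_n$ be the closed normal subgroup of $\Gamma$ generated by the image $\widetilde f^{p^n}$ of $f^{p^n}$, so that $(\widetilde G_n)_{\hat p} = \Gamma/K_n$. Then $p$-regularity of $(X, R \cup \{f^{p^n}\})$ amounts to: (a) for every $r \in R$ whose $p$-th root $s = \sqrt[p]{r}$ exists in $F(X)$, the image $\widetilde s \in \Gamma$ lies outside $K_n$; and (b) $\widetilde f^{p^{n-1}} \notin K_n$, corresponding to the $p$-th root of the new relator $f^{p^n}$.

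For (a), since $(X,R)$ is $p$-regular each such $\widetilde s$ is non-trivial in $\Gamma$, so I would pick a finite $p$-quotient $\rho_r\colon \Gamma \to P_r$ with $\rho_r(\widetilde s) \neq 1$ and let $p^{e_r}$ be the order of $\rho_r(\widetilde f)$ in $P_r$. For every $n \geq e_r$ the map $\rho_r$ kills $\widetilde f^{p^n}$, hence factors through $\Gamma/K_n$ and witnesses $\widetilde s \notin K_n$. Since $R$ is finite, setting $N := \max_{r \in R} e_r$ handles (a) for all $n \geq N$.

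The substantive work is (b), which I would attack using the Zassenhaus (Jennings) filtration $\Gamma = D_1 \supseteq D_2 \supseteq \cdots$ of the finitely generated pro-$p$ group $\Gamma$. Two standard facts are needed: the $D_i$ are open normal subgroups with $\bigcap_i D_i = \{1\}$, and the associated graded $\bigoplus_i D_i/D_{i+1}$ is a graded restricted Lie $\IF_p$-algebra whose restricted $p$-power operation $\xi \mapsto \xi^{[p]}$ sends degree $d$ to degree $pd$. Since $\widetilde f$ has infinite order, $\widetilde f^{p^m} \neq 1$ for every $m$, so by $\bigcap_i D_i = 1$ there is a finite maximal $\delta_m$ with $\widetilde f^{p^m} \in D_{\delta_m}$. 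The nonzero image of $\widetilde f^{p^{m-1}}$ in $D_{\delta_{m-1}}/D_{\delta_{m-1}+1}$ has restricted $p$-power lying in degree $p\delta_{m-1}$, and since $\widetilde f^{p^m} = (\widetilde f^{p^{m-1}})^p$ this forces $\delta_m \geq p\delta_{m-1}$; in particular $\delta_m > \delta_{m-1}$. For every $n \geq 1$ one can therefore choose $j$ with $\delta_{n-1} < j \leq \delta_n$, and in the finite $p$-quotient $\Gamma/D_j$ the image of $\widetilde f$ has order exactly $p^n$; this quotient factors through $\Gamma/K_n$ and witnesses $\widetilde f^{p^{n-1}} \neq 1$ in $\Gamma/K_n$, giving (b) for every $n \geq 1$. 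Combining (a) and (b), the lemma holds for every $n \geq N$.

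The anticipated main obstacle is (b): a naive attempt to start from an arbitrary finite $p$-quotient $Q$ of $G$ in which $\overline f$ has large order and then quotient by the normal closure of $\overline f^{p^n}$ can accidentally kill $\overline f^{p^{n-1}}$, for example when $\widetilde f$ lies in the closure of the commutator subgroup of $\Gamma$ (as happens if $f$ is a commutator). The Zassenhaus filtration repairs this because the restricted Lie algebra structure forces the depths $\delta_m$ to grow at least geometrically in $p$, so a finite $p$-quotient realizing any desired power of $p$ as the order of $\widetilde f$ is automatically available.
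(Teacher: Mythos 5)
Your proof is correct, and its skeleton coincides with the paper's: you identify $(\widetilde G_n)_{\hat p}$ with $G_{\hat p}/K_n$ (right exactness of pro-$p$ completion, which the paper also uses) and split $p$-regularity of the enlarged presentation into condition (a) for the old relators and condition (b) for the new relator $f^{p^n}$. Your treatment of (a) is the paper's argument in a mild disguise: the paper picks a single normal subgroup $H$ of $p$-power index in which all the roots $\sqrt[p]{r_i}$ survive and chooses $N$ with $\pi(f^{p^N})\in H$, while you use one finite $p$-quotient per relator and take $N=\max_r e_r$; these are interchangeable. Where you genuinely diverge is (b). The paper settles it with one soft fact: a non-trivial element $x$ of a pro-$p$ group never lies in the closed normal subgroup generated by $x^p$; applied to $x=\hat g^{\,p^{n-1}}\neq 1$ (non-trivial because $f$ has infinite order in $G_{\hat p}$), this gives (b) for every $n\geq 1$ at once. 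You instead run the Zassenhaus--Jennings filtration: the restricted $p$-power map on the graded restricted Lie algebra forces the depths $\delta_m$ of $\widetilde f^{\,p^m}$ to satisfy $\delta_m\geq p\,\delta_{m-1}$, and the open characteristic subgroups $D_j$ with $\delta_{n-1}<j\leq\delta_n$ then supply explicit finite $p$-quotients in which $\widetilde f$ has order exactly $p^n$ and through which $\Gamma/K_n$ surjects. All the facts you invoke (openness of the $D_i$ for a finitely generated pro-$p$ group, $\bigcap_i D_i=\{1\}$, and $x\in D_d\Rightarrow x^p\in D_{pd}$) are standard, so the argument is sound; it is heavier machinery than the paper needs, but it buys a quantitative version of the paper's soft fact, exhibiting concrete finite $p$-quotients rather than arguing inside the pro-$p$ completion, and your closing remark correctly identifies why a naive choice of finite quotient would fail. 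Either route yields the lemma for all $n\geq N$.
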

\begin{proof} Let $r_1,\ldots, r_m$ be the elements of $R$.
By assumption there is a normal
subgroup of $p$-power index $H$ of $G$ such that $\sqrt[p]{r_i}$ does not
vanish in $G/H$ (whenever $\sqrt[p]{r_i}$ exists in $F(X)$).
Let $\pi\colon F(X)\to G$ be the natural projection, and choose $N\in\IN$ satisfying
$\pi(f^{p^N})\in H$.

Let $n\geq N$, let $g=\pi(f)$, and let $G'=G/\langle\langle g^{p^n}\rangle \rangle
= \langle X | R\cup\{f^{p^n}\}\rangle$.
We claim that the presentation $(X,R\cup\{f^{p^n}\})$ is $p$-regular.
We need to check that
\begin{itemize}
\item[(i)] each $\sqrt[p]{r_i}$ does not vanish in $G'_{\hat p}$
\item[(ii)] $f^{p^{n-1}}$ does not vanish in $G'_{\hat p}$
\end{itemize}
The kernel of the natural map $G\to G'_{\hat p}$ is contained in $H$
since $g^{p^n}\in H$ and $G/H$ is a finite $p$-group. Since $\pi(\sqrt[p]{r_i})\not\in H$,
this implies (i). Further, an element $x\neq 1$ of a pro-$p$ group
cannot lie in the closed normal subgroup generated by $x^p$.
Hence if $\hat g$ is the image of $g$ (also the image of $f$) in $G_{\hat p}$, then
$\hat g^{p^{n-1}}$ does not lie in the closed normal subgroup of $G_{\hat p}$
generated by $\hat g^{p^{n}}$, call this subgroup $C$. Finally, by definition
of $G'$, there is a canonical isomorphism from $G_{\hat p}/C$ to
$G'_{\hat p}$, which maps the image of $f$ in $G_{\hat p}/C$
to the image of $f$ in $G'_{\hat p}$. Thus, we verified (ii).
\end{proof}

\begin{corollary}
\label{cor:pregular}
Let $(X,R)$ be a finite $p$-regular presentation,
and let $G=\langle X\mid R \rangle$. Let $H\subseteq K$ be normal subgroups of $F(X)$
of $p$-power index, and let $\delta>0$ be a real number.
Then there exists a finite set $R'\subset [K,K]$ with
 $\sum\limits_{r\in R'}p^{-e_p(r,F(X))}<\delta$
such that
\begin{enumerate}
\item \label{cor:pregular:regular}  the presentation $(X,R\cup R')$ is $p$-regular;
\item \label{cor:pregular:inequality} if $G'=\langle X\mid R\cup R'\rangle$ and $H'$ is the image of $H$ in $G'$,
then $b_1(H')\leq d(K)$.
\end{enumerate}
Moreover, if $q$ is a prime different from $p$, we can require that
$b_1(H';\IF_q)\leq d(K)$.
\end{corollary}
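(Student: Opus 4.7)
My plan is to linearize the question in the abelianization $H/[H,H]\cong\IZ^{d(H)}$ and to build $R'$ one relator at a time using Lemma~\ref{lem:pregular2}.

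First, the natural map $\varphi\colon H/[H,H]\to K/[K,K]\cong\IZ^{d(K)}$ has image of rank at most $d(K)$, with kernel $A:=(H\cap[K,K])[H,H]/[H,H]$, a finitely generated subgroup of $H/[H,H]$. It will suffice to construct $R'\subset H\cap[K,K]\subseteq [K,K]$ (the inclusion uses $H\subseteq K$, hence $[H,H]\subseteq [K,K]$) such that the image of $\langle\langle R'\rangle\rangle^F\cap H$ in $H/[H,H]$ contains a subgroup of finite index in $A$; this will force $H/[H,H]$ modulo the killed subgroup to have torsion-free rank at most $d(K)$, giving $b_1(H')\le d(K)$. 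Accordingly, I would choose $\IZ$-generators $\bar v_1,\ldots,\bar v_t$ of $A$ and lift each to $v_i\in H\cap[K,K]$.

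Next, I would build $R'=\{r_1,\ldots,r_t\}$ inductively. At step $i$, with $(X,R\cup\{r_1,\ldots,r_{i-1}\})$ a $p$-regular presentation of $G_{i-1}$, I would take $r_i=f_i^{p^{n_i}}$ with $f_i\in v_i\cdot [H,H]$, so that $r_i\in [K,K]$ and $\bar r_i\equiv p^{n_i}\bar v_i$ in $H/[H,H]$. By Lemma~\ref{lem:pregular2} applied to $f_i$, for $n_i$ sufficiently large $(X,R\cup\{r_1,\ldots,r_i\})$ is $p$-regular, provided the image of $f_i$ in $(G_{i-1})_{\hat p}$ has infinite order. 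Choosing $n_i$ also with $p^{-n_i}<\delta/t$ yields $\sum_{r\in R'}p^{-e_p(r,F)}<\delta$, since $e_p(r_i,F)\ge n_i$.

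The main obstacle is to select $f_i\in v_i\cdot[H,H]$ whose image in $(G_{i-1})_{\hat p}$ has infinite order. If $v_i$ itself works, take $f_i=v_i$; otherwise, one must perturb $v_i$ by an element of $[H,H]$, which leaves $\bar v_i\in A$ unchanged but may alter the pro-$p$ order. That a suitable perturbation exists should follow from the richness of $[H,H]$ and the torsion-freeness of the free pro-$p$ group $F_{\hat p}$, together with $p$-regularity preventing unintended collapse in $(G_{i-1})_{\hat p}$; making this rigorous is the most delicate step. Granting this, the final count is immediate: the $\IZ[F/H]$-submodule of $H/[H,H]$ generated by $\{\bar r_i=p^{n_i}\bar v_i\}_{i=1}^t$ contains $p^N\cdot A$ with $N=\max_i n_i$, hence has finite index in $A$, giving $b_1(H')\le d(K)$. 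For the $\IF_q$ variant with $q\ne p$, the factor $p^{n_i}$ is invertible in $\IF_q$, so the corresponding $\IF_q[F/H]$-submodule equals all of $A\otimes\IF_q$, yielding $b_1(H';\IF_q)\le d(K)$.
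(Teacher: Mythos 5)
Your overall strategy is the same as the paper's: exhibit the ``excess'' of $H/[H,H]$ over $K/[K,K]$ by finitely many elements of $H\cap[K,K]$, adjoin high $p$-th powers of these as new relators so that $\sum_{r\in R'} p^{-e_p(r,F(X))}$ is small, and invoke Lemma~\ref{lem:pregular2} one relator at a time to preserve $p$-regularity. Your bookkeeping via the kernel $A$ of $H/[H,H]\to K/[K,K]$ and the $\IZ[F/H]$-submodule generated by the $\bar r_i=p^{n_i}\bar v_i$ is correct and is only a mild repackaging of the paper's argument, which instead Nielsen-reduces a free basis $Y$ of $H$ into $Y_1\sqcup Y_2$ with $|Y_1|\le d(K)$ and $Y_2\subseteq[K,K]$ and kills one basis element at a time, decreasing $b_1(\cdot;\IF_q)$ by at least one per step.

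However, the step you describe as ``the most delicate'' is not a detail but the crux, and you have not supplied an argument for it. Lemma~\ref{lem:pregular2} genuinely requires the image of $f_i$ in $(G_{i-1})_{\hat p}$ to have infinite order: if that image has finite order $p^k$, then for $n_i>k$ the element $\sqrt[p]{f_i^{p^{n_i}}}=f_i^{p^{n_i-1}}$ is already trivial in the image of $G_{i-1}$ in its pro-$p$ completion, hence trivial in the corresponding image for the quotient $G_i$, so $p$-regularity of the enlarged presentation fails --- there is no choice of exponent that rescues this. Your proposed remedy (perturb $v_i$ within the coset $v_i[H,H]$ and appeal to ``the richness of $[H,H]$'' and torsion-freeness of $F(X)_{\hat p}$) is not an argument: the obstruction lives in $(G_{i-1})_{\hat p}$, not in $F(X)_{\hat p}$, and nothing you say rules out that every element of the coset $v_i[H,H]$ becomes torsion there. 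Your version of this difficulty is in fact strictly harder than the paper's, since the paper is free to apply the lemma to a suitable member of the free basis $Y_2$, whereas you must realize a prescribed class $\bar v_i\in H/[H,H]$ by an element of infinite pro-$p$ order. So, as written, the proposal proves conclusion~(2) conditionally on being able to carry out the construction in~(1), and that construction --- the heart of the corollary --- is missing.
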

\begin{proof}
If $b_1(H;\IF_q) \le d(K)$, we can choose $R' = \emptyset$.
Hence we can assume without loss of generality that $b_1(H;\IF_q)>d(K)$.
Clearly, it suffices to prove a weaker statement,
where inequality $b_1(H';\IF_q)\leq d(K)$ is replaced by
$b_1(H';\IF_q) <b_1(H;\IF_q)$.
The assertion of Corollary~\ref{cor:pregular} then follows by
repeated applications with $\delta$ replaced by $\delta/(b_1(H,\IF_q) - d(K))$.

Let $Y$ be any free generating set for $H$. Obviously $K/[K,K]$ is a free
abelian group of rank $d(K)$. Any (finite) matrix over the integers can be
transformed by elementary row and column operations to a diagonal matrix. Hence
by applying elementary transformations to $Y$, we can arrange that $Y$ is a
disjoint union $Y_1\sqcup Y_2$ where $|Y_1|\leq d(K)$ and $Y_2\subseteq [K,K]$.

Let $L=\langle Y_2\rangle$, the subgroup generated by $Y_2$. Since
$b_1(H;\IF_q)>d(K)$,
there exists $f\in Y_2$ whose image in $H/[H,H]H^q\cong H_1(H,\IF_q)$ is non-trivial.
Now apply Lemma~\ref{lem:pregular2} to this $f$, choose $n$
such that $\frac{1}{p^n}<\delta$ and let $R'=\{f^{p^n}\}$.
The choice of $f$ ensures that $b_1(H';\IF_q)<b_1(H;\IF_q)$, so $R'$ has the required properties.
\end{proof}


\subsection{Proof of Theorem~\ref{thm:Qapproximation}}
\label{subsec:Qapproximation}
To simplify the notations, we will give a proof of the main part
of Theorem~\ref{thm:Qapproximation}. The last part of Theorem~\ref{thm:Qapproximation}
is proved in the same way by using the last assertion of Corollary~\ref{cor:pregular}.

We start by giving an outline of the construction.
Let $F = F(X) $ be a free group of rank $d = |X|$. Below we shall define a descending chain
$F=F_0\supseteq F_1\supseteq \ldots$ of normal subgroups of $F$ of $p$-power
index and a sequence of finite subsets $R_1, R_2,\ldots$ of $F$. Let
$R=\bigcup_{i=1}^{\infty}R_n$.
For each $n\in\IZ_{\geq 0}$ we let
$G(n)= F/\langle\langle\bigcup_{i=1}^n R_i\rangle\rangle$, $G(\infty)
=
\varinjlim G(i)
=
F/\langle\langle  R\rangle\rangle$ and let $G$ be the image of $G(\infty)$ in its pro-$p$ completion.
Denote by $G(n)_i$, $G(\infty)_i$ and $G_i$ the canonical image of $F_i$ in
$G(n),G(\infty)$ and $G$, respectively.
We will show that the group $G$ and its subgroups $(G_i)$
satisfy the conclusion of Theorem~\ref{thm:Qapproximation}.

Fix a sequence of positive real numbers $(\delta_n)$ which converges to zero
and a descending chain $(\Phi_n)$ of normal subgroups of $p$-power index in $F$ which
form a base of neighborhoods of $1$ for the pro-$p$ topology.
The subgroups $F_n$ and relator sets $R_n$ will be constructed inductively
so that the following properties hold:

\begin{itemize}
\item[(i)]For $n\geq 0$ we have
\[
\frac{b_1(G(n)_{2n})}{[G(n):G(n)_{2n}]} > d-1-\varepsilon;
\]
\item[(ii)] For $n\geq 1$ we have
\[
\frac{b_1(G(n)_{2n-1})}{[G(n):G(n)_{2n-1}]}<\delta_n;
\]
\item[(iii)] $R_{n}$ is contained in $[F_{2n-2},F_{2n-2}]$ for $n\geq 1$;
\item[(iv)] $F_{2n}\subseteq \Phi_{n}$ for $n\geq 1$;
\item[(v)] ${\rm def}_p(X,\cup_{i=1}^n R_i)>d-1-\varepsilon $ for $n\geq 1$;
\item[(vi)] The presentation $(X,\cup_{i=1}^n R_i)$ is $p$-regular for $n\geq 1$.
\end{itemize}

We first explain why properties (i)-(vi) will imply that the group $G$
and its subgroups $(G_n)$ have the desired properties.
Each $G_n$ is normal of $p$-power index in $G$ since
$F_n$ is normal of $p$-power index in $F$.
Condition (iv) implies that $(G_n)$ is a base of neighborhoods of $1$
for the pro-$p$ topology on $G$, and since $G$ is residually-$p$ by construction,
we have $\bigcap_{n = 1}^{\infty} G_n=\{1\}$.

Condition (iii) implies that
$[G(n):G(n)_{i}]=[G(\infty):G(\infty)_{i}]$ and
$b_1(G(n)_i)=b_1(G(\infty)_i)$ for $i\leq 2n$. Since
$G(\infty)_i$ is normal of $p$-power index in $G(\infty)$,
the group $G(\infty)/[G(\infty)_i,G(\infty)_i]$ is residually-$p$,
so both the index and the first Betti number of $G(\infty)_i$ do
not change under passage to the image in the pro-$p$ completion of $G(\infty)$:
$[G:G_i]=[G(\infty):G(\infty)_i]$ and $b_1(G_i)=b_1(G(\infty)_i)$.
In view of these equalities, conditions (i) and (ii) yield
the corresponding conditions in Theorem~\ref{thm:Qapproximation}.

We now describe the construction of the sets $R_n$ and subgroups $F_n$.
The base case $n=0$ is obvious: we set $F_0=F$ and $G(0)=F$, and the only
condition we require for $n=0$ (condition (i)) clearly holds.

Suppose now that $N\in\IN$ and we constructed subsets $(R_i)_{i=1}^N$
and subgroups $(F_i)_{i=1}^{2N}$ such that (i)-(vi)
hold for all $n\leq N$.

Let $F_{2N+1}=[F_{2N},F_{2N}]F_{2N}^{p^e}$ where $e$ is specified below.
Then $F_{2N+1}$ is a normal subgroup of $p$-power index in $F$
and $F_{2N}\supseteq F_{2N+1}\supset [F_{2N},F_{2N}]$.
Since $b_1(G(N)_{2N})>0$ by $(i)$ for $n=N$ and hence
\begin{eqnarray*}
p^e
&\le &
\bigl|H_1(G(N)_{2N})/p^e \cdot H_1(G(N)_{2N})\bigr|
\\
& = &
\bigl|G(N)_{2N}/ [G(N)_{2N},G(N)_{2N}]G(N)_{2N}^{p^e}\bigr|
\\
& = & |G(N)_{2N}/ G(N)_{2N+1}|
\\
& = &
 [G(N)_{2N}:G(N)_{2N+1}]
\\
& \le &
[G(N):G(N)_{2N+1}],
\end{eqnarray*}
so we can arrange
\[
\frac{d(F_{2N})}{[G(N):G(N)_{2N+1}]} < \delta_{N+1}
\]
by choosing $e$ large enough.

Now applying Corollary~\ref{cor:pregular} with $H=F_{2N+1}$, $K=F_{2N}$ and
$\delta={\rm def}_p(X,\cup_{i=1}^N R_i)-(d-1-\varepsilon)$,
we get that there is a finite subset $R_{N+1}\subseteq [F_{2N},F_{2N}]$ such that
the presentation $(X,\cup_{i=1}^{N+1} R_i)$ is $p$-regular and
${\rm def}_p(X,\cup_{i=1}^{N+1} R_i) > d-1-\varepsilon $.
Hence conditions (iii),(v),(vi) hold for $n=N+1$.
The subgroup $H'$ in the notations of Corollary~\ref{cor:pregular}
is equal to $G(N+1)_{2N+1}$, so $b_1(G(N+1)_{2N+1}) \le  d(F_{2N})$.
Since condition (iii) implies $[G(N+1):G(N+1)_{2N+1}] = [G(N):G(N)_{2N+1}]$,
we conclude
\[\frac{b_1(G(N+1)_{2N+1})}{[G(N+1):G(N+1)_{2N+1}]} \le \frac{d(F_{2N})}{[G(N):G(N)_{2N+1}]}< \delta_{N+1}.
\]
Thus we have shown that conditions (ii),(iii),(v),(vi) hold for $n=N+1$.

It remains to construct $F_{2N+2}$ and to verify (i) and (iv) for $n = N+1$.
We apply Lemma~\ref{lem:pregular} to $G(N+1) = \langle X \mid \cup_{i=1}^{N+1} R_i\rangle$
and obtain using (v) a normal subgroup $H$ of $G(N+1)$ of $p$-power index satisfying
\[
\frac{{\rm def}(H)-1}{[G(N+1):H]}> d -1- \varepsilon.
\]
Let $F_{2N+2}\subseteq F_{2N+1}\cap \Phi_{N+1}$ be the intersection of the preimage of $H$ under the projection
$p_{N+1} \colon F_{N+1} \to G(N+1)$ with $F_{2N+1} \cap \Phi_{N+1}$. Obviously (iv) for holds $n = N+1$.
Then $G(N+1)_{2N+2}$ is a subgroup of
$H$ of finite index.  The quantity ${\rm def}(\cdot)-1$ is supermultiplicative, i.e., if $L$ is a finite index subgroup of $H$,
then ${\rm def}(L)-1\geq [H:L]\cdot ({\rm def}(H)-1)$, see for instance~\cite[Lemma~2.2]{Osin(2011_rankgradient)}. Hence we conclude
\[
\frac{{\rm def}(G(N+1)_{2N+2})-1}{[G(N+1):G(N+1)_{2N+2})]} \geq \frac{{\rm def}(H)-1}{[G(N+1):H]}> d -1- \varepsilon.
\]
Since $b_1(G(N+1)_{2N+2}) \ge {\rm def}(G(N+1)_{2N+2})$, condition (i) holds for $n = N+1$. This finishes the proof of
Theorem~\ref{thm:Qapproximation}.

\typeout{-------------------------------------- References  ---------------------------------------}


\begin{thebibliography}{10}

\bibitem{Abert-Jaikin-Zapirain-Nikolov(2011)}
M.~Ab{\'e}rt, A.~Jaikin-Zapirain, and N.~Nikolov.
\newblock The rank gradient from a combinatorial viewpoint.
\newblock {\em Groups Geom. Dyn.}, 5(2):213--230, 2011.

\bibitem{Abert-Nikolov(2012)}
M.~Ab{\'e}rt and N.~Nikolov.
\newblock Rank gradient, cost of groups and the rank versus {H}eegaard genus
  problem.
\newblock {\em J. Eur. Math. Soc. (JEMS)}, 14(5):1657--1677, 2012.

\bibitem{Bergeron-Linnell-Lueck-Sauer(2012)}
N.~Bergeron, P.~Linnell, W.~L\"uck, and R.~Sauer.
\newblock On the growth of {B}etti numbers in $p$-adic analytic towers.
\newblock Preprint, arXiv:1204.3298v1 [math.GT], to appear in Groups, Geometry,
  and Dynamics, 2012.

\bibitem{Cohen(1989)}
D.~E. Cohen.
\newblock {\em Combinatorial group theory: a topological approach}.
\newblock Cambridge University Press, Cambridge, 1989.

\bibitem{Elek(2003c)}
G.~Elek.
\newblock The rank of finitely generated modules over group algebras.
\newblock {\em Proc. Amer. Math. Soc.}, 131(11):3477--3485 (electronic), 2003.

\bibitem{Erschler(2004randomwalks)}
A.~Erschler.
\newblock Growth rates of small cancellation groups.
\newblock In {\em Random walks and geometry}, pages 421--430. Walter de Gruyter
  GmbH \& Co. KG, Berlin, 2004.

\bibitem{Gaboriau(2000b)}
D.~Gaboriau.
\newblock Co\^ut des relations d'\'equivalence et des groupes.
\newblock {\em Invent. Math.}, 139(1):41--98, 2000.

\bibitem{Gaboriau(2002a)}
D.~Gaboriau.
\newblock Invariants {$l\sp 2$} de relations d'\'equivalence et de groupes.
\newblock {\em Publ. Math. Inst. Hautes \'Etudes Sci.}, 95:93--150, 2002.

\bibitem{Gaboriau(2002b)}
D.~Gaboriau.
\newblock On orbit equivalence of measure preserving actions.
\newblock In {\em Rigidity in dynamics and geometry (Cambridge, 2000)}, pages
  167--186. Springer, Berlin, 2002.

\bibitem{Lackenby(2005expanders)}
M.~Lackenby.
\newblock Expanders, rank and graphs of groups.
\newblock {\em Israel J. Math.}, 146:357--370, 2005.

\bibitem{Lackenby(2007covering)}
M.~Lackenby.
\newblock Covering spaces of 3-orbifolds.
\newblock {\em Duke Math. J.}, 136(1):181--203, 2007.

\bibitem{Linnell-Lueck-Sauer(2011)}
P.~Linnell, W.~L{\"u}ck, and R.~Sauer.
\newblock The limit of {$\Bbb F_p$}-{B}etti numbers of a tower of finite covers
  with amenable fundamental groups.
\newblock {\em Proc. Amer. Math. Soc.}, 139(2):421--434, 2011.

\bibitem{Lueck(1994c)}
W.~L{\"u}ck.
\newblock Approximating ${L}\sp 2$-invariants by their finite-dimensional
  analogues.
\newblock {\em Geom. Funct. Anal.}, 4(4):455--481, 1994.

\bibitem{Lueck(2002)}
W.~L{\"u}ck.
\newblock {\em {$L\sp 2$}-{I}nvariants: {T}heory and {A}pplications to
  {G}eometry and \mbox{{$K$}-{T}heory}}, volume~44 of {\em Ergebnisse der
  Mathematik und ihrer Grenzgebiete. 3.~Folge. A Series of Modern Surveys in
  Mathematics [Results in Mathematics and Related Areas. 3rd Series. A Series
  of Modern Surveys in Mathematics]}.
\newblock Springer-Verlag, Berlin, 2002.

\bibitem{Lueck(2013l2approxfib)}
W.~L{\"u}ck.
\newblock Approximating {$L^2$}-invariants and homology growth.
\newblock {\em Geom. Funct. Anal.}, 23(2):622--663, 2013.

\bibitem{Lueck-Osin(2011)}
W.~L{\"u}ck and D.~Osin.
\newblock Approximating the first {$L^2$}-{B}etti number of residually finite
  groups.
\newblock {\em J. Topol. Anal.}, 3(2):153--160, 2011.

\bibitem{Olshanskii-Osin(2008)}
A.~Y. Olshanskii and D.~V. Osin.
\newblock Large groups and their periodic quotients.
\newblock {\em Proc. Amer. Math. Soc.}, 136(3):753--759, 2008.

\bibitem{Osin(2011_rankgradient)}
D.~Osin.
\newblock Rank gradient and torsion groups.
\newblock {\em Bull. Lond. Math. Soc.}, 43(1):10--16, 2011.

\bibitem{Osin-Thom(2013)}
D.~Osin and A.~Thom.
\newblock Normal generation and {$\ell^2$}-{B}etti numbers of groups.
\newblock {\em Math. Ann.}, 355(4):1331--1347, 2013.

\bibitem{Schlage-Puchta(2012)}
J.-C. Schlage-Puchta.
\newblock A {$p$}-group with positive rank gradient.
\newblock {\em J. Group Theory}, 15(2):261--270, 2012.

\bibitem{Wise(1998)}
D.~T. Wise.
\newblock Incoherent negatively curved groups.
\newblock {\em Proc. Amer. Math. Soc.}, 126(4):957--964, 1998.

\end{thebibliography}

\end{document}